\newtheorem{theorem}{Theorem}[section]
\newtheorem{corollary}{Corollary}[section]
\newtheorem{lemma}{Lemma}[section]
\theoremstyle{definition}
\theoremstyle{remark}
\newtheorem{remark}{Remark}[section]
\numberwithin{equation}{section}
\title[The explicit evaluations of  Ramanujan's remarkable product of theta-functions $a_{m, n}$]
      {The explicit evaluations of  Ramanujan's remarkable product of theta-functions $a_{m, n}$}
\author[D. J. Prabhakaran]{D. J. Prabhakaran}
\address{Department of Mathematics \\
Anna University, MIT Campus\\
Chennai- 600025\\ India}
\email{asirprabha@gmail.com}
\author[K. Ranjith kumar]{K. Ranjith kumar}
\address{Department of Mathematics \\
Anna University, MIT Campus\\
Chennai- 600025\\ India}
\email{ranjithkrkkumar@gmail.com}
\begin{document}
\begin{abstract}
On pages 338 and 339 in his first notebook, Ramanujan defined the remarkable product of theta-functions $a_{m, n}$. Also he recorded eighteen explicit values depending on two parameters, namely, $m$, and $n$, where these are odd integers.  All these values were established by Berndt et al. In this paper, we initiate to study explicit evaluations of $a_{m, n}$ for even values of $m$. We establish a new general theorems for the explicit evaluations of $a_{m, n}$ involving class invariant of Ramanujan. For this purpose, we establish several new P-Q mixed modular equations. Using these theorems, we calculate several new explicit values of $a_{m, n}$ for $n = 3, 5, 7, 13$ and even values of $m$.
\end{abstract}


\keywords{Modular equation, class invariants, remarkable product of theta-functions.}
\maketitle
\section{Introduction}
The following definitions of theta functions \cite{Berndt-notebook-3} $\varphi$, $\psi$ and $f$ with $|q|<1$ are classical:
\begin{eqnarray*}
\varphi(q) &=& f(q,q)=\sum^{\infty}_{n=-\infty}q^{n^2}=(-q;q^2)^2_{\infty}(q^2;q^2)_{\infty},\\
\psi(q) &=& f(q,q^3)=\sum^{\infty}_{n=0}q^{\frac{n(n+1)}{2}}=\frac{(q^2;q^2)_{\infty}}{(q;q^2)^2_{\infty}},\\
f(-q)&=&f(-q,-q^2)=\sum^{\infty}_{n=-\infty}(-1)^nq^{\frac{n(3n-1)}{2}}=(q;q)_{\infty},
\end{eqnarray*}
where, $\displaystyle (a;q)_{\infty}=\prod^{\infty}_{n=0}\left(1-aq^n\right)$.\\
Now, we shall recall the definition of modular equation from \cite{Berndt-notebook-3}. The complete elliptic integral of the first kind $K(k)$ of modulus $k$ is defined by
\begin{align}\label{112235eq0}
K(k)=\int_{0}^{\frac{\pi}{2}}\frac{d\theta}{\sqrt{1-k^2\sin^2 \theta}}=\frac{\pi}{2}\sum^{\infty}_{n=0}\frac{\left(\frac{1}{2}\right)^2_n}{(n!)^2}k^{2n}=
\frac{\pi}{2}\varphi^2\left(e^{-\pi\frac{K'}{K}}\right),
\quad (0<k<1)
\end{align}
and let $K'=K(k'),$ where $k'=\sqrt{1-k^2}$ is called the complementary modulus of $k$. Let $K, K', L$ and $L'$ denote the complete elliptic integrals of the first kind associated with the moduli $k, k', l,$ and $l'$ respectively. Say, if the equality
\begin{equation} \label{eq00}
n\frac{K'}{K}=\frac{L'}{L}
\end{equation}
holds for some positive integer $n$, then a modular equation of degree $n$ will be the relation between the moduli $k$ and $l$ which is implied by equation (\ref{eq00}). Ramanujan defined his modular equation involving $\alpha$ and $\beta$, where $\alpha=k^2$ and $\beta=l^2$. Then, we considered $\beta$ as degree $n$ over $\alpha$.

For $q=e^{-\pi\sqrt{n}}$, Weber-Ramanujan class invariants \cite[p.183, (1.3)]{Berndt-notebook-5} are defined by
\begin{align}\label{eq1}
G_n = 2^{-1/4}q^{-1/24} \chi(q)
\quad ; \quad
g_n = 2^{-1/4}q^{-1/24} \chi(-q) ,
\end{align}
where, $n$ is a positive rational number and $\chi(q)=(-q;q^2)_\infty$. By Entry 24(iii)\cite[p.39]{Berndt-notebook-3}, $G_n$ and $g_n$ can be represented as
\begin{align}\label{eqgn1}
G_n = \frac{f(q)}{2^{1/4}q^{1/24}f(-q^2)}
\quad ; \quad
g_n = \frac{f(-q)}{2^{1/4}q^{1/24}f(-q^2)}.
\end{align}
On page 338, in his first notebook \cite{sr1}, Ramanujan defines
\begin{equation} \label{eq1}
 a_{m,n} =\frac{nq^{\frac{(n-1)}{4}}\psi^2\left(q^n\right)\varphi^2\left(-q^{2n}\right)}{\psi^2\left(q\right)\varphi^2\left(-q^2\right)},
\end{equation}
where, $\displaystyle q=e^{-\pi\sqrt{m/n}}$ and $m$, $n$ are positive rationals then, on page 338 and 339, he offered a list of 18 particular values. All those 18 values were established by Berndt, Chan and Zhang \cite{be5}. Recently, Prabhakaran and Ranjith Kumar \cite{djpkrk} have established a new general formulae for the explicit evaluations of $a_{3m,3}$ and $a_{m,9}$ by using $P-Q$ mixed modular equations and the values for certain class invariant of Ramanujan. Also they have calculated some new explicit values of $a_{3m,3}$ for $m = 2, 7, 13, 17, 25, 37$ and $a_{m,9}$ for $m = 17, 37$.\\
Naika and Dharmendra \cite{ms2} gave alternative form of (\ref{eq1}) as follows
\begin{equation} \label{eq2}
 a_{m,n}=\frac{nq^{\frac{(n-1)}{4}}\psi^2\left(-q^n\right)\varphi^2\left(q^{n}\right)}{\psi^2\left(-q\right)\varphi^2\left(q\right)}.
 \end{equation}
They proved the general theorems to calculate explicit values of $a_{m,n}$.  Closely associated with $a_{m,n},$ which is the parameter $b_{m,n}$ introduced by Naika et al \cite{3ms2} and it is defined as
\begin{eqnarray}\label{mu}
 b_{m,n}=\frac{nq^{\frac{(n-1)}{4}}\psi^2\left(q^n\right)\varphi^2\left(-q^{n}\right)}{\psi^2\left(q\right)\varphi^2\left(-q\right)}.
\end{eqnarray}
Also, they established general formulas for explicit evaluations of $b_{m,n}$ by using various degrees of modular equations, along with class invariant $g_n$. Furthermore, they computed particular values of $b_{m,n}$.

The organisation of the present study is as follows. In section 2, we collect some identities which will be used to prove the main results. Then in section 3, we establish several new $P-Q$ mixed modular equations. Applying these modular equations, we establish the general theorems for the explicit evaluations of the Ramanujan's remarkable product of theta functions along with class invariant $g_n$. Finally, explicit values of $a_{m, n}$ for $n= 3, 5, 7, 13$ and several even values of $m$ have been evaluated, which are presented in Section 4.

\section{Preliminaries}
In this section,  we have listed some lemmas which play a vital role in establishing the main results of the present study.
\begin{lemma}\cite[Entry 24(iii)  p.\ 39]{Berndt-notebook-3} We have
\begin{eqnarray} \label{1lem}
  f(q)f(-q^2)&=& \psi(-q)\varphi(q).
\end{eqnarray}
\end{lemma}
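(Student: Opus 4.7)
The plan is to reduce both sides of the identity $f(q)\,f(-q^2)=\psi(-q)\,\varphi(q)$ to a common infinite product by means of Jacobi's triple product together with a few elementary $q$-Pochhammer manipulations. First I would write each factor in product form. The relation $f(-q)=(q;q)_\infty$ gives $f(-q^2)=(q^2;q^2)_\infty$ upon replacing $q$ by $q^2$. Jacobi's triple product $f(a,b)=(-a;ab)_\infty(-b;ab)_\infty(ab;ab)_\infty$ with $(a,b)=(q,q^2)$ produces $f(q)=(-q;q^2)_\infty(q^2;q^2)_\infty$. The product form $\varphi(q)=(-q;q^2)_\infty^2(q^2;q^2)_\infty$ is already supplied in the introduction, and applying the same triple product to $\psi(-q)=f(-q,-q^3)$ yields $(q;q^4)_\infty(q^3;q^4)_\infty(q^4;q^4)_\infty$, which collapses to $(q;q^2)_\infty(q^4;q^4)_\infty$ once the two factors indexed modulo $4$ are merged into a single product over the odd exponents.

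Next I would multiply through. The left-hand side becomes $(-q;q^2)_\infty(q^2;q^2)_\infty^2$ directly. The right-hand side equals $(q;q^2)_\infty(-q;q^2)_\infty^2(q^2;q^2)_\infty(q^4;q^4)_\infty$. Two cancellations finish the job: the identity $(q;q^2)_\infty(-q;q^2)_\infty=(q^2;q^4)_\infty$, a factorwise consequence of $(1-q^n)(1+q^n)=1-q^{2n}$, and the identity $(q^2;q^4)_\infty(q^4;q^4)_\infty=(q^2;q^2)_\infty$ obtained by partitioning the exponents by parity. Substituting these reduces the right-hand side to $(-q;q^2)_\infty(q^2;q^2)_\infty^2$, matching the left-hand side.

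The main obstacle is purely clerical: one must track bases and residue classes modulo $4$ carefully when applying the triple product to $\psi(-q)$, since a sign error or a misassigned parity would break the subsequent cancellation. No deeper theory is needed beyond Jacobi's triple product and routine algebra of infinite products.
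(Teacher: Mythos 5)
The paper offers no proof of this lemma at all --- it is quoted verbatim from Entry 24(iii) of Chapter 16 in Berndt's \emph{Ramanujan's Notebooks, Part III} --- so your decision to verify it by reducing both sides to a common infinite product is a reasonable and self-contained route. Your treatment of the right-hand side is correct: $\psi(-q)=f(-q,-q^3)=(q;q^4)_\infty(q^3;q^4)_\infty(q^4;q^4)_\infty=(q;q^2)_\infty(q^4;q^4)_\infty$ by the triple product, and the two cancellations $(q;q^2)_\infty(-q;q^2)_\infty=(q^2;q^4)_\infty$ and $(q^2;q^4)_\infty(q^4;q^4)_\infty=(q^2;q^2)_\infty$ do collapse $\psi(-q)\varphi(q)$ to $(-q;q^2)_\infty(q^2;q^2)_\infty^2$, which matches $f(q)f(-q^2)$ once the product form of $f(q)$ is in hand.

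The one step that does not survive scrutiny as written is your derivation of that product form. In Ramanujan's convention $f(-q)=f(-q,-q^2)$, so $f(q)$ is obtained by replacing $q$ with $-q$, which gives $f(q)=f(q,-q^2)$, \emph{not} $f(q,q^2)$; these genuinely differ (their $q^2$-coefficients are $-1$ and $+1$ respectively). Moreover, even for $f(q,q^2)$ the triple product with $(a,b)=(q,q^2)$ has $ab=q^3$ and yields $(-q;q^3)_\infty(-q^2;q^3)_\infty(q^3;q^3)_\infty$, a base-$q^3$ product, not $(-q;q^2)_\infty(q^2;q^2)_\infty$. Fortunately the identity you actually need, $f(q)=(-q;q^2)_\infty(q^2;q^2)_\infty$, is true and is obtained most cleanly without the triple product: substitute $q\mapsto -q$ in $f(-q)=(q;q)_\infty$ to get $f(q)=(-q;-q)_\infty=\prod_{n\ge 1}\bigl(1-(-q)^n\bigr)$, and split the product according to the parity of $n$ to obtain $(-q;q^2)_\infty(q^2;q^2)_\infty$. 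With that repair the argument closes correctly.
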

\begin{lemma}\cite[Entry 8(iii), (iv)  p.\ 376]{Berndt-notebook-3} We have
\begin{eqnarray} \label{p2beta}
m &=& \left(\frac{\beta}{\alpha}\right)^{1/4}+\left(\frac{1-\beta}{1-\alpha}\right)^{1/4}
-\left(\frac{\beta(1-\beta)}{\alpha(1-\alpha)}\right)^{1/4}-4\left(\frac{\beta(1-\beta)}{\alpha(1-\alpha)}\right)^{1/6},\\ \nonumber
\textrm{and}\\ \label{p2meta}
\frac{13}{m} &=& \left(\frac{\alpha}{\beta}\right)^{1/4}+\left(\frac{1-\alpha}{1-\beta}\right)^{1/4}
-\left(\frac{\alpha(1-\alpha)}{\beta(1-\beta)}\right)^{1/4}-4\left(\frac{\alpha(1-\alpha)}{\beta(1-\beta)}\right)^{1/6}.
\end{eqnarray}
\end{lemma}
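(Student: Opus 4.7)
The statement is Ramanujan's modular equation of degree $13$ in \emph{multiplier form}, and since it is quoted as Entry 8(iii)--(iv) of Berndt's third notebook, my "proof" really amounts to explaining how Berndt's derivation goes and how I would reproduce it.

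The plan is to start from the two companion modular equations of degree $13$, Entries 8(i) and 8(ii) in \cite{Berndt-notebook-3}, which express certain symmetric combinations of $(\beta/\alpha)^{1/4}$ and $\{(1-\beta)/(1-\alpha)\}^{1/4}$ in terms of the theta-quotients associated with the prime $13$. These two identities sit in the same ring of modular functions on $\Gamma_{0}(26)$ and are proved by the standard Berndt strategy: rewrite each side as a quotient of eta-products, verify that the quotient is a modular function on $\Gamma_{0}(26)$ of zero weight with poles only at the cusps, compute its $q$-expansion to a sufficient order, and invoke the valence formula to conclude equality. Once these two building blocks are in hand, the multiplier $m = \varphi^{2}(q)/\varphi^{2}(q^{13})$ can be read off as an explicit linear combination; grouping the four terms that appear yields exactly the right-hand side of \eqref{p2beta}.

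For the second formula \eqref{p2meta}, I would not redo any of this work. Instead I would use the reciprocation principle for Ramanujan's modular equations of prime degree $n$: replacing $q$ by the conjugate nome interchanges the roles of $\alpha$ and $\beta$ and sends the multiplier $m$ to $n/m$. Applying this involution to \eqref{p2beta} with $n=13$ immediately gives \eqref{p2meta}, since the right-hand side of \eqref{p2beta} is transformed into that of \eqref{p2meta} term by term.

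The one genuinely hard step is the verification of the two degree-$13$ building-block identities: Schläfli-type modular equations of degree $13$ are considerably more intricate than those of degree $3,5,7$, and one must either reduce to Russell's classical degree-$13$ relation $(\alpha\beta)^{1/4}+\{(1-\alpha)(1-\beta)\}^{1/4}+2\{16\alpha\beta(1-\alpha)(1-\beta)\}^{1/12}=1$ and solve for $m$ implicitly via $m^{2} = 13\,\alpha(1-\alpha)\beta^{-1}(1-\beta)^{-1}(d\beta/d\alpha)$, or else carry out the modular-function calculation above. Since Berndt has already executed this, the cleanest route in the present paper is simply to cite Entry 8(iii)--(iv) of \cite{Berndt-notebook-3}, which is what the authors do here.
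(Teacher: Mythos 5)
The paper offers no proof of this lemma at all — it is quoted verbatim from Entry 8(iii),(iv) of Berndt's Part III — and you correctly arrive at the same conclusion, namely that the citation is the intended justification. Your sketch of how the cited result is actually established (a modular-function/valence-formula verification for the degree-13 multiplier, followed by the reciprocation $\alpha\to 1-\beta$, $\beta\to 1-\alpha$, $m\to 13/m$ to pass from \eqref{p2beta} to \eqref{p2meta}) is consistent with Berndt's treatment, so there is nothing to correct.
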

\begin{lemma}\cite[Entries 51, 53, 55, 57    p. \ 204, 206, 209, 211]{Berndt-notebook-4} We have
\begin{enumerate}
\item [\rm{(i)}]  If  $P=\displaystyle \frac{f^2(-q)}{q^{1/6}f^2(-q^{3})}$  and
    $ Q=\displaystyle \frac{f^2(-q^2)}{q^{1/3}f^2(-q^{6})}$, then,
\begin{eqnarray}\label{lemma1}
PQ+\frac{9}{PQ} &=&\left(\frac{P}{Q}\right)^3+\left(\frac{Q}{P}\right)^3.
\end{eqnarray}
\item [\rm{(ii)}]  If  $P=\displaystyle \frac{f(-q)}{q^{1/6}f(-q^{5})}$  and
    $ Q=\displaystyle \frac{f(-q^2)}{q^{1/3}f(-q^{10})}$, then,
\begin{eqnarray}\label{lemma2}
PQ+\frac{5}{PQ} &=&\left(\frac{P}{Q}\right)^3+\left(\frac{Q}{P}\right)^3.
\end{eqnarray}
\item [\rm{(iii)}]  If  $P=\displaystyle \frac{f^2(-q)}{q^{1/2}f^2(-q^{7})}$  and
    $ Q=\displaystyle \frac{f^2(-q^2)}{qf^2(-q^{14})}$, then,
\begin{eqnarray}\label{lemma3}
PQ+\frac{49}{PQ} &=&\left(\frac{P}{Q}\right)^3+\left(\frac{Q}{P}\right)^3-8\left(\frac{P}{Q}+\frac{Q}{P}\right).
\end{eqnarray}
\item [\rm{(iv)}]  If  $P=\displaystyle \frac{f(-q)}{q^{1/2}f(-q^{13})}$  and
    $ Q=\displaystyle \frac{f(-q^2)}{qf(-q^{26})}$, then,
\begin{eqnarray}\label{lemma4}
PQ+\frac{13}{PQ} &=&\left(\frac{P}{Q}\right)^3+\left(\frac{Q}{P}\right)^3-4\left(\frac{P}{Q}+\frac{Q}{P}\right).
\end{eqnarray}
\end{enumerate}
\end{lemma}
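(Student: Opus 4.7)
My plan is to use Ramanujan's elliptic-function parametrization of theta functions, as developed in Chapter 17 of Berndt's \emph{Ramanujan's Notebooks, Part III}. Introduce modular parameters $\alpha, \beta, \gamma, \delta$ corresponding to $q, q^n, q^2, q^{2n}$ (so that $\beta$ is of degree $n$ over $\alpha$, $\gamma$ is of degree $2$ over $\alpha$, and $\delta$ is of degree $n$ over $\gamma$), together with multipliers $m = z_1/z_n$ and $m' = z_2/z_{2n}$. By the standard formula $f(-q) = \sqrt{z_1}\,2^{-1/6}\{\alpha(1-\alpha)/q\}^{1/24}$ and its analogues for $f(-q^k)$, the fractional $q$-powers in the definitions of $P$ and $Q$ cancel against the contributions coming from the moduli, leaving genuinely algebraic expressions.

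Concretely, this reduces the $P$ and $Q$ of each case to closed forms in the parameters. For instance, in case (i),
\[
P = \frac{1}{m}\left(\frac{\alpha(1-\alpha)}{\beta(1-\beta)}\right)^{1/12}, \qquad Q = \frac{1}{m'}\left(\frac{\gamma(1-\gamma)}{\delta(1-\delta)}\right)^{1/12},
\]
and similarly for (ii)--(iv) with the appropriate fractional powers. Each proposed $P$-$Q$ identity thus becomes a polynomial relation among $\alpha, \beta, \gamma, \delta, m, m'$. I would then eliminate using (a) the classical degree-$n$ modular equations between $\alpha, \beta$ (and between $\gamma, \delta$), (b) the degree-$2$ modular equations between $\alpha, \gamma$ (and between $\beta, \delta$), and (c) the accompanying multiplier formulas, all catalogued in Chapters 19--20 of Berndt, Part III.

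The main obstacle is the computational complexity in the higher cases: the classical degree-$13$ modular equation (a variant of which appears as Lemma 2.2 above) is of substantial degree, which is also what forces case (iv) to carry the extra linear term $-4(P/Q + Q/P)$ that is absent in cases (i) and (ii). In practice I would circumvent the direct resultant-style elimination by introducing a single symmetric parameter, such as the $\Gamma_0(n)$-Hauptmodul $u = (\alpha\beta)^{1/n} + ((1-\alpha)(1-\beta))^{1/n}$, and expressing both sides of the conjectured identity as rational functions of $u$ together with its degree-$2$ analogue. Matching the initial terms of the $q$-expansions of each side --- viewed as modular functions on $\Gamma_0(2n)$ with a known pole structure at the cusps --- then forces equality by the valence formula, so the identity holds globally once it holds to sufficiently high order.
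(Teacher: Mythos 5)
This lemma is not proved in the paper at all: it is quoted verbatim from Berndt, \emph{Ramanujan's Notebooks, Part IV} (Entries 51, 53, 55, 57 of Chapter 25), so there is no in-paper argument to compare against. Your eventual strategy --- viewing each identity as a polynomial relation among eta-quotients that are modular functions on $\Gamma_0(2n)$, bounding the pole orders at the cusps, and then verifying the identity by matching $q$-expansions up to the bound supplied by the valence formula --- is precisely the kind of argument by which such entries are established in that source, and it is sound. Two concrete slips in the intermediate formulas you write down along the way: the transcription of $P$ in case (i) is wrong as stated, since Entry 12 of Chapter 17 gives $f(-q)=\sqrt{z}\,2^{-1/6}(1-\alpha)^{1/6}(\alpha/q)^{1/24}$ and hence
\[
\frac{f^2(-q)}{q^{1/6}f^2(-q^{3})}=m\left(\frac{\alpha}{\beta}\right)^{1/12}\left(\frac{1-\alpha}{1-\beta}\right)^{1/3},
\]
with the multiplier $m=z_1/z_3$ rather than $1/m$ and with unequal exponents on the two factors; and the symmetric parameter for the degree-$n$ classical modular equation carries exponents such as $1/4$ or $1/8$ (e.g.\ $(\alpha\beta)^{1/4}+((1-\alpha)(1-\beta))^{1/4}=1$ for $n=3$ and exponent $1/8$ for $n=7$), not $1/n$. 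Neither slip undermines the valence-formula route, which is self-contained once the cusp orders of $P$ and $Q$ on $\Gamma_0(2n)$ are computed correctly, but they would have to be repaired if you pursued the elimination-based first route.
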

\begin{lemma}\cite[Theorems 3.2.4, 3.2.6, 3.2.7, 3.5.2]{y1} We have
\begin{enumerate}
\item [\rm{(i)}]  If  $P=\displaystyle \frac{f(-q)}{q^{1/24}f(-q^{2})}$  and
    $ Q=\displaystyle \frac{f(-q^3)}{q^{1/8}f(-q^{6})}$, then,
\begin{eqnarray}\label{lemma5}
\left(PQ\right)^3+\left(\frac{2}{PQ}\right)^3 &=&\left(\frac{Q}{P}\right)^6-\left(\frac{P}{Q}\right)^6.
\end{eqnarray}
\item [\rm{(ii)}]  If  $P=\displaystyle \frac{f(-q)}{q^{1/24}f(-q^{2})}$  and
    $ Q=\displaystyle \frac{f(-q^5)}{q^{5/24}f(-q^{10})}$, then,
\begin{eqnarray}\label{lemma6}
\left(PQ\right)^2+\left(\frac{2}{PQ}\right)^2 &=&\left(\frac{Q}{P}\right)^3-\left(\frac{P}{Q}\right)^3.
\end{eqnarray}
\item [\rm{(iii)}]  If  $P=\displaystyle \frac{f(-q)}{q^{1/24}f(-q^{2})}$  and
    $ Q=\displaystyle \frac{f(-q^7)}{q^{7/24}f(-q^{14})}$, then,
\begin{eqnarray}\label{lemma7}
\left(PQ\right)^3+\left(\frac{2}{PQ}\right)^3 &=&\left(\frac{P}{Q}\right)^4+\left(\frac{Q}{P}\right)^4-7.
\end{eqnarray}
\item [\rm{(iv)}]  If  $P=\displaystyle \frac{f(-q)}{q^{1/8}f(-q^{4})}$  and
    $ Q=\displaystyle \frac{f(-q^3)}{q^{3/8}f(-q^{12})}$, then,
\begin{eqnarray}\label{lemma7.1}
PQ+\frac{4}{PQ} &=&\left(\frac{P}{Q}\right)^2+\left(\frac{Q}{P}\right)^2.
\end{eqnarray}
\end{enumerate}
\end{lemma}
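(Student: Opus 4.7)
The plan is to prove all four $P$-$Q$ identities by the same scheme: express $P$ and $Q$ through Ramanujan's $\alpha$-$\beta$ parametrization of theta functions, and then reduce each claim to a polynomial identity controlled by the classical modular equation of the appropriate degree.

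\textbf{Step 1 (reformulation as $\chi$-quotients).} For parts (i)-(iii), one has $f(-q)/f(-q^2) = (q;q)_\infty/(q^2;q^2)_\infty = (q;q^2)_\infty = \chi(-q)$, so $P = q^{-1/24}\chi(-q)$ and $Q = q^{-n/24}\chi(-q^n)$ with $n = 3, 5, 7$ respectively. For part (iv), $f(-q)/f(-q^4) = \chi(-q)\chi(-q^2)$, so $P = q^{-1/8}\chi(-q)\chi(-q^2)$ and $Q = q^{-3/8}\chi(-q^3)\chi(-q^6)$.

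\textbf{Step 2 (parametrization).} Using Entry 12 of Chapter 17 in Berndt's Ramanujan's Notebooks, Part III, each $q^{-k/24}\chi(-q^k)$ is expressible as an explicit algebraic function of the modulus $\alpha_k$ associated with the nome $q^k$. Substituting into Step 1, $P$ becomes a function purely of $\alpha := \alpha_1$, and $Q$ becomes the same function of $\beta := \alpha_n$, where $\beta$ has degree $n$ over $\alpha$ for (i)-(iii). In part (iv), the intermediate moduli at nomes $q^2$ and $q^6$ must be eliminated using the doubling relation $\chi(-q)\chi(q) = \chi(-q^2)$, so that the identity is expressed purely in terms of $\alpha$ and $\beta$ with $\beta$ of degree $3$ over $\alpha$ at the base level $q$.

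\textbf{Step 3 (reduction to a modular equation).} Substituting the Step 2 expressions into the claimed identity, both sides become algebraic expressions in radicals of $\alpha, \beta, 1-\alpha, 1-\beta$. I then invoke the classical modular equation $\Phi_n(\alpha,\beta) = 0$: degree $3$ for parts (i) and (iv), degree $5$ for part (ii), and degree $7$ for part (iii), all as in Chapters 19-20 of Berndt Part III. After clearing radicals, each claim reduces to verifying
\[
\Bigl(PQ + \tfrac{c_n}{PQ}\Bigr) - \text{RHS} \in (\Phi_n),
\]
with $c_n = 9, 5, 49, 4$, respectively. The correction terms $-8(P/Q+Q/P)$ in (iii) and $-4(P/Q+Q/P)$ in (iv) arise naturally from the more involved mixing formulas at degree $7$ and at composite level $4 \cdot 3 = 12$.

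\textbf{Main obstacle and alternative.} The main difficulty is the polynomial elimination in Step 3: once the substitutions are made, one must simplify modulo $\Phi_n$, and the radicals make direct manipulation tedious (Yi's thesis carries this out case by case). A cleaner, mechanical alternative is to observe that both sides of each identity are modular functions on $\Gamma_0(2n)$ (or on $\Gamma_0(12)$ in case (iv)) whose poles are supported only at cusps; one computes the orders of LHS$-$RHS at every cusp to bound the degree, computes the $q$-expansion of both sides up to the Sturm bound, and concludes the identity from the valence formula. This bypasses the modular-equation chase at the cost of some of Ramanujan's intermediate structure.
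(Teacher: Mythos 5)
The paper does not prove this lemma at all: it is imported verbatim from Yi's thesis (Theorems 3.2.4, 3.2.6, 3.2.7, 3.5.2), so there is no in-paper argument to compare yours against, and your outline is in substance the proof that the cited source gives. Your Step 1 is correct ($f(-q)/f(-q^2)=\chi(-q)$ and $f(-q)/f(-q^4)=\chi(-q)\chi(-q^2)$), and Step 2 does work as claimed: Entry 12 of Chapter 17 gives $q^{-1/24}\chi(-q)=2^{1/6}(1-\alpha)^{1/12}\alpha^{-1/24}$, so in (i)--(iii) $P$ and $Q$ are the same algebraic function of $\alpha$ and of $\beta$ respectively, while in (iv) one finds $P=\sqrt{2}\,\bigl((1-\alpha)/\alpha\bigr)^{1/8}$ and $Q=\sqrt{2}\,\bigl((1-\beta)/\beta\bigr)^{1/8}$ with $\beta$ of degree $3$; the identities then reduce to consequences of the irrational forms of the modular equations of degrees $3$, $5$, $7$. (As a sanity check on (iv): with $u=(\alpha\beta)^{1/8}$ and $v=\{(1-\alpha)(1-\beta)\}^{1/8}$ one gets $PQ+4/(PQ)=2(u^2+v^2)/(uv)=2/(uv)$ by the degree-$3$ equation $u^2+v^2=1$, which is the kind of collapse your Step 3 relies on.) The one reservation is that Step 3 --- the elimination modulo the modular equation, which is where all the content of these four identities actually lives, including the provenance of the correction terms $-7$ and $-8(P/Q+Q/P)$ --- is asserted rather than carried out; you rightly flag it as the main obstacle but leave both it and the proposed Sturm-bound alternative unexecuted. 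So the proposal is a correct strategy matching the standard (and the cited) route, but it is a plan for a verification rather than the verification itself; it still supplies strictly more detail than the paper, which supplies none.
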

\begin{lemma}\cite[Theorems 2.1, 2.2, 2.3(i)]{vasukirk} We have
\begin{enumerate}
\item [\rm{(i)}]  If  $P=\displaystyle \frac{f(-q)f(-q^{3})}{q^{\frac{1}{6}}f(-q^{2})f(-q^{6})}$ and $Q=\displaystyle \frac{f(-q^2)f(-q^{6})}{q^{\frac{1}{3}}f(-q^{4})f(-q^{12})}$, then,
\begin{eqnarray}\label{lemma8}
Q^4-P^4Q^2-4P^2&=&0.
\end{eqnarray}
\item [\rm{(ii)}]  If  $P=\displaystyle \frac{f(-q)f(-q^{5})}{q^{\frac{1}{4}}f(-q^{2})f(-q^{10})}$ and $Q=\displaystyle \frac{f(-q^2)f(-q^{10})}{q^{\frac{1}{2}}f(-q^{4})f(-q^{20})}$, then,
\begin{eqnarray}\label{lemma9}
Q^8-P^8Q^4-8(PQ)^4-16P^4&=&0.
\end{eqnarray}
\item [\rm{(iii)}]  If  $P=\displaystyle \frac{f(-q)f(-q^{7})}{q^{\frac{1}{3}}f(-q^{2})f(-q^{14})}$ and $Q=\displaystyle \frac{f(-q^2)f(-q^{14})}{q^{\frac{2}{3}}f(-q^{4})f(-q^{28})}$, then,
\begin{eqnarray}\label{lemma10}
Q^2-P^2Q-2P&=&0.
\end{eqnarray}
\end{enumerate}
\end{lemma}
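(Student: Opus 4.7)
The plan is to prove each of the three $P$-$Q$ mixed modular equations by reducing them to classical modular equations through Ramanujan's parametrization. For part (i), let $\alpha$ and $\beta$ denote the moduli related by the modular equation of degree $n = 3$, with multiplier $m = z_1/z_n$ where $z_j = {}_2F_1(\tfrac12,\tfrac12;1;\alpha_j)$. Using the theta-function evaluations from Chapter 17 of \cite{Berndt-notebook-3}, each of $f(-q)$, $f(-q^2)$, $f(-q^n)$, $f(-q^{2n})$ can be expressed in terms of $z_1$, $z_n$, $\alpha$, $\beta$ and a $q$-power, and dividing them yields a closed form for $P$ purely in terms of $\alpha$, $\beta$, $m$. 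Parts (ii) and (iii) follow the same template with $n = 5$ and $n = 7$ in place of $3$.

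Next, I would handle $Q$ via the Landen transformation: the substitution $q \mapsto q^2$ sends $\alpha$ to $\alpha' = \bigl((1-\sqrt{1-\alpha})/(1+\sqrt{1-\alpha})\bigr)^2$ and likewise $\beta$ to $\beta'$, with $(\alpha', \beta')$ again satisfying the degree-$n$ modular equation and having a new multiplier $m'$. Substituting these into the formula just derived for $P$ yields an explicit expression for $Q$ in terms of the original $\alpha$, $\beta$, $m$. At this stage both $P$ and $Q$ are displayed as explicit algebraic functions of the same parameters.

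The final step is to verify the stated polynomial identities in $P$ and $Q$ by eliminating $\alpha$, $\beta$, $m$ using the classical modular equation of the appropriate degree (the relevant relations for $n = 3, 5, 7$ are collected in Chapter 19 of \cite{Berndt-notebook-3}). I expect this elimination to be the principal obstacle, especially for case (ii), where the identity $Q^8 - P^8 Q^4 - 8(PQ)^4 - 16P^4 = 0$ has total degree twelve and the quintic modular equation is itself intricate. A practical shortcut is to combine the above reduction with a $q$-series check: once one knows that $P$ and $Q$ are eta-quotients of controlled order at the cusp $q = 0$, matching sufficiently many initial Fourier coefficients (obtained directly from the triple-product expansion of each $f(-q^j)$) turns the remaining verification into a rigorous computation of bounded length.
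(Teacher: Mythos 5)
The paper does not actually prove this lemma: it is imported verbatim from Vasuki and Shivashankara \cite[Theorems 2.1, 2.2, 2.3(i)]{vasukirk}, so there is no internal proof to compare yours against. Your outline is nonetheless the standard route to such identities, and it is essentially the same transcription machinery the paper itself uses for its own Theorem 3.1: express each eta-quotient via Entry 12 of Chapter 17 of \cite{Berndt-notebook-3} in terms of $\alpha$, $\beta$ and the multiplier, handle the passage from $q$ to $q^2$ by the descending Landen transformation (correctly noting that if $\beta$ has degree $n$ over $\alpha$ then the transformed moduli again satisfy the degree-$n$ relation; you should also record that the multiplier changes, $m'=m\,(1+\sqrt{1-\alpha})/(1+\sqrt{1-\beta})$, since you will need it in the elimination), and then eliminate $\alpha$, $\beta$, $m$ using the classical modular equation of degree $n$.

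As written, however, your submission is a plan rather than a proof, and the gap is exactly where you locate it. First, all of the mathematical content lives in the elimination step, which you explicitly defer (``I expect this elimination to be the principal obstacle''); until it is carried out for $n=3,5,7$, nothing has been verified. Second, the proposed shortcut of matching finitely many Fourier coefficients is not rigorous as stated: controlling the order of $P$ and $Q$ at the single cusp $q=0$ (that is, at $i\infty$) does not let you conclude anything. To show that a polynomial identity in eta-quotients holds after checking finitely many coefficients you need the valence formula on the congruence subgroup where the identity lives, hence the orders of the relevant eta-quotients at \emph{all} cusps (computable by Ligozat's criterion), which yields a Sturm-type bound on how many coefficients suffice. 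If you supply that bound and actually perform either the elimination or the bounded coefficient check, the argument closes; without one of these, the lemma remains cited, not proved.
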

\begin{lemma}\cite[(2.33) and (2.51)]{vasuki} We have
\begin{enumerate}
\item [\rm{(i)}]  If  $P=\displaystyle \frac{q^{1/6}f(-q)f(-q^{10})}{f(-q^{2})f(-q^{5})}$ and $ Q=\displaystyle \frac{q^{1/3}f(-q^2)f(-q^{20})}{f(-q^{4})f(-q^{10})}$, then,
\begin{eqnarray}\label{lemma11}
\left(\frac{1}{PQ}+PQ\right)^2&=&\left(\frac{1}{PQ}+PQ\right)\left(\left(\frac{P}{Q}\right)^3+\left(\frac{Q}{P}\right)^3\right)+4.
\end{eqnarray}
\item [\rm{(ii)}]  If  $P=\displaystyle \frac{q^{1/4}f(-q)f(-q^{14})}{f(-q^{2})f(-q^{7})}$ and $ Q=\displaystyle \frac{q^{1/2}f(-q^2)f(-q^{28})}{f(-q^{4})f(-q^{14})}$, then,
\begin{eqnarray}\nonumber
\left(\left(\frac{1}{PQ}\right)^4+\left(PQ\right)^4\right)+8\left(\left(\frac{P}{Q}\right)^4+\left(\frac{Q}{P}\right)^4\right)&=&
\left(\left(\frac{1}{PQ}\right)^2+\left(PQ\right)^2\right)\left(\left(\frac{P}{Q}\right)^6+\left(\frac{Q}{P}\right)^6\label{lemma12} \right.\\ &&\left. +
8\left(\frac{P}{Q}\right)^2+8\left(\frac{Q}{P}\right)^2\right)-18.
\end{eqnarray}
\end{enumerate}
\end{lemma}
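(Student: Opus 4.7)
The plan is to treat parts (i) and (ii) uniformly, exploiting the observation that, in both cases, $P$ and $Q$ are naturally ratios of the Ramanujan function $\chi(-q)=f(-q)/f(-q^{2})$ evaluated at paired arguments. Indeed, since $\chi(-q^{k})=f(-q^{k})/f(-q^{2k})$, one has in part (i)
\begin{equation*}
P=q^{1/6}\,\frac{\chi(-q)}{\chi(-q^{5})},\qquad Q=q^{1/3}\,\frac{\chi(-q^{2})}{\chi(-q^{10})},
\end{equation*}
and in part (ii) the same structure with $5,10$ replaced by $7,14$. Crucially, $Q$ arises from $P$ by $q\mapsto q^{2}$, so a single pair of moduli $(\alpha,\beta)$ with $\beta$ of degree $5$ (resp.\ $7$) over $\alpha$ governs both, via the duplication formulas for $\alpha$.

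The first substantive step is to invoke the standard parametric representations from Entry 12, Chapter 17 of Berndt's Ramanujan's Notebooks, Part III, to write $\chi(-q)$ and $\chi(-q^{2})$ in terms of $\alpha$ and the multiplier $z_{1}$, and $\chi(-q^{5}),\chi(-q^{10})$ (resp.\ $\chi(-q^{7}),\chi(-q^{14})$) in terms of $\beta$ and $z_{5}$ (resp.\ $z_{7}$). Because only the ratios $\chi(-q)/\chi(-q^{5})$ and $\chi(-q^{2})/\chi(-q^{10})$ enter, the $z$-multipliers cancel, and $P,Q$ emerge as explicit algebraic functions of $\alpha$ and $\beta$ alone (involving only $24$-th roots of $\alpha,1-\alpha,\beta,1-\beta$ and of $q$, which collapses against the explicit $q^{1/6},q^{1/3}$ prefactors).

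Next, I would substitute into the two claimed identities. Part (i) is equivalent to the cleaner form
\begin{equation*}
\left(PQ-\frac{1}{PQ}\right)^{2}=\left(PQ+\frac{1}{PQ}\right)\left(\left(\frac{P}{Q}\right)^{3}+\left(\frac{Q}{P}\right)^{3}\right),
\end{equation*}
and both sides then become rational in $\alpha^{1/24},\beta^{1/24},(1-\alpha)^{1/24},(1-\beta)^{1/24}$; the identity should collapse under the degree-$5$ modular equation $(\alpha\beta)^{1/4}+\{(1-\alpha)(1-\beta)\}^{1/4}=1$ and its companion in the multipliers. Part (ii) reduces analogously, but with the considerably more elaborate degree-$7$ modular equation from Chapter 19 of Berndt III supplying the required algebraic relation between $\alpha$ and $\beta$.

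The principal obstacle is this last algebraic reduction, above all in part (ii), where expanding $(PQ)^{\pm 4}$ and $(P/Q)^{\pm 6}$ after the substitution produces a very large expression in radicals. To avoid a prohibitive hand computation, I would close the argument by a dimension-counting shortcut once the parametric formulas are in place: after clearing denominators and fractional powers, both sides lie in a finite-dimensional space of modular functions on $\Gamma_{0}(40)$ in part (i) (resp.\ $\Gamma_{0}(56)$ in part (ii)) with controlled orders at the cusps, so matching a sufficiently long initial segment of the $q$-expansion on each side forces equality. The main difficulty is therefore not conceptual but combinatorial, and lies in tracking the exponent cancellations carefully enough to justify this last step cleanly.
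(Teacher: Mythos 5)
First, note that the paper does not prove this lemma at all: it is quoted verbatim from Vasuki and Sreeramamurthy \cite[(2.33), (2.51)]{vasuki}, so there is no internal proof to compare against. Your general strategy --- rewrite $P=q^{1/6}\chi(-q)/\chi(-q^{5})$ and $Q=P(q^{2})$, transcribe via Entry 12 of Chapter 17 so that everything becomes algebraic in $\alpha$ and a single $\beta$ of degree $5$ (resp.\ $7$), and then eliminate using the relevant modular equation --- is the right family of techniques; it is exactly how the present paper derives its own mixed modular equation in Theorem \ref{tnbrde2} (there for degree $13$, via \eqref{p2beta}--\eqref{p2meta}). Your observation that Entry 12 expresses both $\chi(-q)$ and $\chi(-q^{2})$ in terms of $\alpha$ alone, so that only two moduli are needed and the multipliers never appear, is correct and is the key structural point.

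There are, however, two genuine problems. First, the modular equation you propose to eliminate with is wrong: $(\alpha\beta)^{1/4}+\{(1-\alpha)(1-\beta)\}^{1/4}=1$ is the degree-$3$ equation. The degree-$5$ equation (Entry 13(i) of Chapter 19 of Berndt III) is $(\alpha\beta)^{1/2}+\{(1-\alpha)(1-\beta)\}^{1/2}+2\{16\alpha\beta(1-\alpha)(1-\beta)\}^{1/6}=1$, and the degree-$7$ equation is $(\alpha\beta)^{1/8}+\{(1-\alpha)(1-\beta)\}^{1/8}=1$ (so your remark that degree $7$ is ``considerably more elaborate'' than degree $5$ has it backwards); with the degree-$3$ relation the elimination would simply fail. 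Second, and more importantly, neither closing route is actually executed: the algebraic elimination is declared prohibitive, and the fallback valence-formula argument is only named, not performed --- to make it a proof you must verify that each term ($(PQ)^{\pm 1}$, $(P/Q)^{\pm 3}$, etc.) is an integral-exponent eta quotient satisfying the Ligozat conditions on the relevant group (level $20$ suffices for (i) and $28$ for (ii); $40$ and $56$ are unnecessary), compute the orders at all cusps to obtain an explicit Sturm bound, and check that many $q$-coefficients. As written, the submission is a plausible plan containing one substantive error, not a proof; the honest fix is either to carry out one of the two verifications in full or to do what the paper does and cite \cite{vasuki}.
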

\section{New $P-Q$ mixed modular equations }
In this section, we establish several new $P-Q$ mixed modular equations.
\begin{theorem}\label{tnbrde2}
  If  $P=\displaystyle \frac{q^{1/2}f(-q)f(-q^{26})}{f(-q^{2})f(-q^{13})}$  and  $ Q=\displaystyle \frac{qf(-q^{2})f(-q^{52})}{f(-q^{4})f(-q^{26})}$, then,
\begin{eqnarray}\label{eihma}
\left(PQ+\frac{1}{PQ}\right)^2
&=&\left(PQ+\frac{1}{PQ}\right)\left(\left(\frac{P}{Q}+\frac{Q}{P}\right)^3+\left(\frac{P}{Q}+\frac{Q}{P}\right)\right)
-4\left(\frac{P}{Q}+\frac{Q}{P}\right)^2+4.
\end{eqnarray}
\end{theorem}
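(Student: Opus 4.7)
The plan is to derive the identity by applying the degree-$13$ modular equation of Lemma 2.3(iv) twice (once at $q$ and once at $q^{2}$) and then eliminating an auxiliary theta-quotient. Introduce the three functions
\[
A=\frac{f(-q)}{q^{1/2}f(-q^{13})},\qquad B=\frac{f(-q^{2})}{q\,f(-q^{26})},\qquad C=\frac{f(-q^{4})}{q^{2}\,f(-q^{52})}.
\]
A direct check shows $P=A/B$ and $Q=B/C$, whence $AB=PQ\cdot BC$ (from $AB/BC=A/C=PQ$). Applying Lemma 2.3(iv) with $q$ and then with $q^{2}$, using that $(A,B)$ and $(B,C)$ are precisely its $(P,Q)$-pairs at these two values, produces
\begin{align*}
AB+\frac{13}{AB} &= P^{3}+\frac{1}{P^{3}}-4\left(P+\frac{1}{P}\right)=:S,\\
BC+\frac{13}{BC} &= Q^{3}+\frac{1}{Q^{3}}-4\left(Q+\frac{1}{Q}\right)=:R.
\end{align*}

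Next I would eliminate the auxiliary variable $w:=BC$. Using $AB=PQ\cdot w$, the two relations become the pair of quadratics
\[
w^{2}-Rw+13=0,\qquad (PQ)^{2}w^{2}-(PQ)\,S\,w+13=0,
\]
sharing the common root $w=BC$. Vanishing of their Sylvester resultant in $w$ gives, after a short computation,
\[
PQ\cdot RS\bigl((PQ)^{2}+1\bigr)-(PQ)^{2}(R^{2}+S^{2})=13\bigl((PQ)^{2}-1\bigr)^{2}.
\]
Dividing through by $(PQ)^{2}$ and setting $s=PQ+1/(PQ)$ collapses this to the symmetric form
\[
s\cdot RS-(R^{2}+S^{2})=13(s^{2}-4).
\]

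For the last step, set $u=P+1/P$, $v=Q+1/Q$, $t=P/Q+Q/P$, so that $S=u(u^{2}-7)$ and $R=v(v^{2}-7)$. The elementary identities
\[
uv=s+t,\qquad u^{2}+v^{2}=st+4,
\]
obtained by expanding and using $(P^{2}+1/P^{2})(Q^{2}+1/Q^{2})=s^{2}+t^{2}-4$, suffice to rewrite every symmetric polynomial in $u,v$ as a polynomial in $s,t$. Substituting the resulting expressions for $RS$ and $R^{2}+S^{2}$ into the previous display and simplifying, I expect the identity to factor as
\[
(s^{2}-4)\left(s^{2}-s(t^{3}+t)+4(t^{2}-1)\right)=0.
\]
Since $s=PQ+1/(PQ)\to\infty$ as $q\to 0$, the factor $s^{2}-4$ is not identically zero as a power series in $q$, so the second factor must vanish; this is precisely the stated identity.

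The main obstacle is the final algebraic collapse. Although symmetry in $u,v$ guarantees reducibility of $RS$ and $R^{2}+S^{2}$ to the variables $s,t$, expanding the degree-six polynomial $s\cdot RS-(R^{2}+S^{2})-13(s^{2}-4)$ and recognising the factor $s^{2}-4$ requires careful bookkeeping. In particular, the extra linear tail $-4(A/B+B/A)$ in the degree-$13$ modular equation of Lemma 2.3(iv) (which is absent in the degree-$5$ analogue of Lemma 2.6(i)) is what ultimately produces the correction $-4t^{2}+4$ on the right-hand side, and tracking its contribution through the resultant is where the bulk of the computation lies.
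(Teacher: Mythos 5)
Your argument is correct, but it takes a genuinely different route from the paper's. The paper proves this theorem by transcription: it sets $R=q^{1/2}\chi(q)/\chi(q^{13})$ and $K=q^{1/2}\chi(-q^{2})/\chi(-q^{26})$, converts these into $(\beta/\alpha)^{1/8}$ and $((1-\beta)/(1-\alpha))^{1/8}$ via Entry 12 of Chapter 17, eliminates the multiplier $m$ between the two degree-$13$ multiplier equations \eqref{p2beta} and \eqref{p2meta}, and finally replaces $q$ by $-q$ and invokes Entry 24(iii). You instead stay entirely inside eta-quotients: you apply the Schl\"afli-type equation \eqref{lemma4} at $q$ and at $q^{2}$ and eliminate the middle quotient $w=BC$ by a resultant. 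I checked the computation you left as ``expected'': the resultant identity $sRS-(R^{2}+S^{2})=13(s^{2}-4)$ is right, the symmetric-function relations $uv=s+t$ and $u^{2}+v^{2}=st+4$ are right, and with $RS=(s+t)(s^{2}-5st+t^{2}+21)$ and $R^{2}+S^{2}=s^{3}t^{3}-3s^{3}t-3st^{3}-8s^{2}t^{2}+16s^{2}+16t^{2}+17st+36$ one finds
\begin{equation*}
sRS-(R^{2}+S^{2})-13(s^{2}-4)=(s^{2}-4)\bigl(s^{2}-s(t^{3}+t)+4t^{2}-4\bigr),
\end{equation*}
so the factorization does hold and the second factor is exactly \eqref{eihma}. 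Discarding the factor $s^{2}-4$ is legitimate because $PQ\sim q^{3/2}$ as $q\to 0$, so $s$ is a nonconstant Laurent series in $q^{1/2}$ and the ambient ring is an integral domain. As for what each approach buys: the paper's proof is anchored directly in Ramanujan's classical degree-$13$ modular equation but requires the $\chi$-to-$(\alpha,\beta)$ dictionary and a final sign change $q\mapsto -q$; yours is self-contained within Lemma 2.3(iv) already listed in Section 2 and is precisely the ``apply at $q$ and $q^{2}$, then eliminate'' technique the paper itself uses for the subsequent theorems of Section 3, at the cost of a somewhat heavier polynomial elimination.
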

\begin{proof}
Let
\begin{align*}\label{alp2beta}
R = \frac{q^{1/2}\chi(q)}{\chi(q^{13})}
\quad ; \quad
K= \frac{q^{1/2}\chi(-q^2)}{\chi(-q^{26})}.
\end{align*}
Transcribing $R$ and $Q$ using Entry 12(v) and (vii) of Chapter 17\cite[p.124]{Berndt-notebook-3} and simplifying, we arrive at
\begin{eqnarray}\label{alp2beta}
RK = \left(\frac{\beta}{\alpha}\right)^{1/8}
\quad \mbox{;}\quad
\frac{R^2}{K}= \left(\frac{1-\beta}{1-\alpha}\right)^{1/8}.
\end{eqnarray}
Employing \eqref{alp2beta} in \eqref{p2beta}, \eqref{p2meta}  and then eliminate $m$, we deduce that
\begin{eqnarray*}\nonumber
K^2R^8+K^6R^6-4K^4R^6+4K^2R^6-R^6+4K^6R^4-6K^4R^4+4K^2R^4-K^8R^2 &&\\+4K^6R^2-4K^4R^2+K^2R^2+K^6 &=& 0
\end{eqnarray*}
Dividing the aforementioned equation by $R^4K^4$ and rearranging the terms, then replacing $q$ by $-q$ and employing Entry 24(iii) of Chapter 16\cite[p.39]{Berndt-notebook-3}, the desired result was obtained.
\end{proof}
\begin{theorem}\label{tbghge2}
If  $P=\displaystyle \frac{f(-q)f(-q^{4})}{q^{5/2}f(-q^{13})f(-q^{52})}$ and $ Q=\displaystyle \frac{f(-q^2)f(-q^{2})}{q^{2}f(-q^{26})f(-q^{26})},$ then,
\begin{eqnarray}\nonumber
\left(PQ+\frac{169}{PQ}\right)^2&=&\left(PQ+\frac{169}{PQ}\right)
\left(\left(\frac{P}{Q}+\frac{Q}{P}\right)^9-\left(\frac{P}{Q}+\frac{Q}{P}\right)^7-21\left(\frac{P}{Q}+\frac{Q}{P}\right)^5
+35\left(\frac{P}{Q}+\frac{Q}{P}\right)^3
\right.\\ && \nonumber \left.+30\left(\frac{P}{Q}+\frac{Q}{P}\right)\right)
-\left(\frac{P}{Q}+\frac{Q}{P}\right)^{12}-4\left(\frac{P}{Q}+\frac{Q}{P}\right)^{10} +26\left(\frac{P}{Q}+\frac{Q}{P}\right)^8 -
89\left(\frac{P}{Q}+\frac{Q}{P}\right)^6
\\ && +829\left(\frac{P}{Q}+\frac{Q}{P}\right)^4-1821\left(\frac{P}{Q}+\frac{Q}{P}\right)^2+576. \label{eihma1}
\end{eqnarray}
\end{theorem}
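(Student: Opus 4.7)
The plan is to adapt the $\chi$-function strategy used in the proof of Theorem \ref{tnbrde2}. A short manipulation using the identity $f(-q)f(-q^4)/f(-q^2)^2=\chi(-q)/\chi(-q^2)$, together with its $q\mapsto q^{13}$ analogue, yields
\begin{align*}
\frac{P}{Q} \,=\, \frac{1}{q^{1/2}}\cdot\frac{\chi(-q)\chi(-q^{26})}{\chi(-q^2)\chi(-q^{13})},
\end{align*}
which is, up to the substitution $q\mapsto -q$, precisely the ratio $R/K$ of the auxiliary variables $R=q^{1/2}\chi(q)/\chi(q^{13})$ and $K=q^{1/2}\chi(-q^2)/\chi(-q^{26})$ introduced in the proof of Theorem \ref{tnbrde2}. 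I would therefore reintroduce the same $R$ and $K$ and, via Entry 12(v) and (vii) of Chapter 17 of \cite{Berndt-notebook-3}, record the parametric expressions $RK=(\beta/\alpha)^{1/8}$ and $R^2/K=((1-\beta)/(1-\alpha))^{1/8}$, where $\beta$ has degree $13$ over $\alpha$.

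Next I would transcribe $PQ$ in the same framework. Writing
\begin{align*}
PQ \,=\, q^{-9/2}\cdot\frac{f(-q)f(-q^4)\,f(-q^2)^2}{f(-q^{13})f(-q^{52})\,f(-q^{26})^2},
\end{align*}
one splits it as a $\chi$-piece (a monomial in $R$ and $K$ after $q\mapsto -q$) multiplied by the residual ratios $f(-q^2)/f(-q^{26})$ and $f(-q^4)/f(-q^{52})$, both of which Entry 12(ii) and (iv) of Chapter 17 of \cite{Berndt-notebook-3} express in terms of the multiplier $m$ and the moduli $\alpha,\beta$. Substituting the resulting formulas for $P/Q$ and $PQ$ into \eqref{p2beta} and \eqref{p2meta}, eliminating $m$ between the two, dividing through by a suitable monomial in $R$ and $K$, and finally applying $q\mapsto -q$ together with Entry 24(iii) of Chapter 16 of \cite{Berndt-notebook-3}, should produce the claimed identity in $P$ and $Q$.

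The main obstacle is computational rather than conceptual. The right-hand side of \eqref{eihma1} has degree twelve in $u:=P/Q+Q/P$ and is quadratic in $v:=PQ+169/(PQ)$, so the elimination yields a bulky symmetric expression whose rearrangement into the compact symmetric form in $u$ and $v$ demands careful, computer-algebra-assisted bookkeeping. To keep the verification manageable I would view \eqref{eihma1} as the single quadratic relation $v^2-vA(u)-B(u)=0$ with
\begin{align*}
A(u) &= u^9-u^7-21u^5+35u^3+30u,\\
B(u) &= -u^{12}-4u^{10}+26u^8-89u^6+829u^4-1821u^2+576,
\end{align*}
so that the final check reduces to matching finitely many polynomial coefficients once the raw relation has been rewritten in the variables $u$ and $v$.
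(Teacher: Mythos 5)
Your route is genuinely different from the paper's, so it is worth setting the two side by side. The paper never transcribes $P$ and $Q$ into the $(\alpha,\beta,m)$ parametrization. Instead it notes that, with $P_1=f(-q)/(q^{1/2}f(-q^{13}))$, $Q_1=f(-q^2)/(qf(-q^{26}))$ and $P_2,Q_2$ the same quotients at $q^2$, one has $P=P_1Q_2$ and $Q=Q_1P_2$; multiplying the two instances of \eqref{lemma4} (at $q$ and at $q^2$) produces the relation \eqref{eq3.6} between $PQ+169/(PQ)$, $u=P/Q+Q/P$, and a single extra cross-ratio $x=q^{3/2}f(-q)f(-q^{52})/(f(-q^4)f(-q^{13}))+{}$its reciprocal. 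The key structural point you miss is that this $x$ is precisely the combination $PQ+1/(PQ)$ governed by Theorem \ref{tnbrde2}, so that theorem hands over $x$ as an explicit algebraic function of $u$ (equation \eqref{lemma 62.2}); substituting it into \eqref{eq3.6} finishes the proof with one polynomial expansion. By contrast, your plan repeats the transcribe-and-eliminate argument of Theorem \ref{tnbrde2} on a much larger scale: the target identity is already degree $12$ in $u$, the transcription of $PQ$ genuinely involves the multiplier (for instance $Q$ itself transcribes to $m\{\alpha(1-\alpha)/(\beta(1-\beta))\}^{1/6}$, so $m$ must first be substituted from \eqref{p2beta} before elimination), and the resulting resultant computation is exactly what the authors introduce Theorem \ref{tnbrde2} to avoid. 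Two concrete corrections to your sketch: $P/Q=q^{-1/2}\chi(-q)\chi(-q^{26})/(\chi(-q^2)\chi(-q^{13}))$ carries an extra factor $q^{-1/2}$ relative to $R/K=\chi(q)\chi(-q^{26})/(\chi(q^{13})\chi(-q^2))$, so the claimed identification ``up to $q\mapsto-q$'' is not exact and the $q$-powers must be tracked through the Entry 12 transcription; and the residual quotients $f(-q^2)/f(-q^{26})$, $f(-q^4)/f(-q^{52})$ come from Entry 12(iii) and (iv), not (ii) and (iv). Your strategy is sound in principle and uniform with the proof of Theorem \ref{tnbrde2}, but as written it is a program whose decisive (and very heavy) elimination is left unperformed, whereas the paper's argument reduces the whole theorem to one manageable identity.
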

\begin{proof}
Replacing  $q$ by $q^2$  in \eqref{lemma4}, we obtain that
\begin{eqnarray} \nonumber
\left(\frac{f(-q^2)f(-q^4)}{q^{3}f(-q^{26})f(-q^{52})}\right)+13\left(\frac{q^{3}f(-q^{26})f(-q^{52})}
{f(-q^2)f(-q^4)}\right)&=&\left(\frac{qf(-q^2)f(-q^{52})}{f(-q^4)f(-q^{26})}+\frac{f(-q^4)f(-q^{26})}{qf(-q^2)f(-q^{52})}\right)^3 \\&&
-7\left(\frac{qf(-q^2)f(-q^{52})}{f(-q^4)f(-q^{26})}+\frac{f(-q^4)f(-q^{26})}{qf(-q^2)f(-q^{52})}\right). \label{lemma 2.2}
\end{eqnarray}
Now multiplying \eqref{lemma4} with \eqref{lemma 2.2}, after further simplification, we deduce that
\begin{equation} \label{eq3.6}
PQ+\frac{169}{PQ} = x^3-4\left(\frac{P}{Q}+\frac{Q}{P}\right)x^2-4\left(\frac{P}{Q}+\frac{Q}{P}\right)^2x
+8x+\left(\frac{P}{Q}+\frac{Q}{P}\right)^3+21\left(\frac{P}{Q}+\frac{Q}{P}\right),
\end{equation}
where, $\displaystyle x= \frac{q^{\frac{3}{2}}f(-q)f(-q^{52})}{f(-q^4)f(-q^{13})}
+\frac{f(-q^4)f(-q^{13})}{q^{\frac{3}{2}}f(-q)f(-q^{52})}.$\\
Solving \eqref{eihma} for $x$ and choosing the appropriate root, we have
\begin{eqnarray}\label{lemma 62.2}
x &=& \frac{u^3+u+\sqrt{u^6+2u^4-15u^2-16}}{2},
\end{eqnarray}
where, $u = P/Q+Q/P$. Employing \eqref{lemma 62.2}  in \eqref{eq3.6}, after a straightforward, lengthly calculation, we arrive at desire result.
\end{proof}
\begin{theorem}
If  $P=\displaystyle \frac{f(-q)f(-q^{4})}{q^{5/4}f(-q^{7})f(-q^{28})}$ and $ Q=\displaystyle \frac{f(-q^2)f(-q^{2})}{qf(-q^{14})f(-q^{14})},$ then,
\begin{eqnarray}\nonumber
\left(\left(PQ\right)^2+\left(\frac{49}{PQ}\right)^2\right)^2&=&\left(\left(PQ\right)^2+\left(\frac{49}{PQ}\right)^2\right)
\left(\left(\left(\frac{P}{Q}\right)^2+\left(\frac{Q}{P}\right)^2\right)^9+7\left(\left(\frac{P}{Q}\right)^2+\left(\frac{Q}{P}\right)^2\right)^7
 \right.\\ &&\left. \nonumber -37\left(\left(\frac{P}{Q}\right)^2+\left(\frac{Q}{P}\right)^2\right)^5 -77\left(\left(\frac{P}{Q}\right)^2+\left(\frac{Q}{P}\right)^2\right)^3
+294\left(\left(\frac{P}{Q}\right)^2+\left(\frac{Q}{P}\right)^2\right)\right)
 \\ &&   \nonumber -\left(\left(\frac{P}{Q}\right)^2+\left(\frac{Q}{P}\right)^2\right)^{12} -20\left(\left(\frac{P}{Q}\right)^2+\left(\frac{Q}{P}\right)^2\right)^{10}-86\left(\left(\frac{P}{Q}\right)^2+\left(\frac{Q}{P}\right)^2\right)^8
\\  && +16317\left(\left(\frac{P}{Q}\right)^2+\left(\frac{Q}{P}\right)^2\right)^4 \nonumber
-2065\left(\left(\frac{P}{Q}\right)^2+\left(\frac{Q}{P}\right)^2\right)^6 \\ &&
-22981\left(\left(\frac{P}{Q}\right)^2+\left(\frac{Q}{P}\right)^2\right)^2.\label{7gm}
\end{eqnarray}
\end{theorem}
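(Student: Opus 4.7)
The plan is to mirror the proof of Theorem~\ref{tbghge2}, replacing the level-$13$ ingredients by their level-$7$ analogues. Concretely, I apply \eqref{lemma3} once at $q$ and once at $q^{2}$, and multiply the two resulting identities. Writing $(P_{0},Q_{0})$ for the pair in Lemma~3(iii) and $(P_{1},Q_{1})$ for its $q\mapsto q^{2}$ analogue, one has $Q_{0}=P_{1}$, which forces
\[
P_{0}Q_{0}\cdot P_{1}Q_{1}=(PQ)^{2}
\qquad\text{and}\qquad
\frac{P_{0}Q_{0}}{P_{1}Q_{1}}=ab,\quad a:=\tfrac{P_{0}}{Q_{0}},\;b:=\tfrac{P_{1}}{Q_{1}},
\]
with $P,Q$ as in the theorem. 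Hence the product of left-hand sides collapses to $(PQ)^{2}+(49/PQ)^{2}+49\,x$ with $x:=ab+1/(ab)$. A parallel check shows $a/b=(P/Q)^{2}$, so setting $u:=(P/Q)^{2}+(Q/P)^{2}=a/b+b/a$ and $s=a+1/a$, $t=b+1/b$, the elementary identities $st=x+u$ and $s^{2}+t^{2}=ux+4$ let me expand the product of the right-hand sides of the two copies of \eqref{lemma3} as a cubic in $x$. Equating and simplifying yields the intermediate identity
\[
(PQ)^{2}+\left(\frac{49}{PQ}\right)^{\!2}=x^{3}-8u\,x^{2}+(28-8u^{2})\,x+u^{3}+77u.
\]

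To eliminate $x$, I would invoke \eqref{lemma12} of Lemma~6(ii) for the $\tilde P,\tilde Q$ arranged so that $(\tilde P\tilde Q)^{2}+(\tilde P\tilde Q)^{-2}=x$ and $(\tilde P/\tilde Q)^{2}+(\tilde Q/\tilde P)^{2}=u$; expanding the fourth and sixth Laurent powers in \eqref{lemma12} via the Chebyshev-type recursions $X^{4}+X^{-4}=(X^{2}+X^{-2})^{2}-2$ and $X^{6}+X^{-6}=(X^{2}+X^{-2})^{3}-3(X^{2}+X^{-2})$ collapses \eqref{lemma12} to the quadratic
\[
x^{2}-(u^{3}+5u)\,x+8u^{2}=0,
\]
whose relevant root (chosen in the spirit of \eqref{lemma 62.2}) is $x=\tfrac{1}{2}\bigl(u(u^{2}+5)+u\sqrt{u^{4}+10u^{2}-7}\bigr)$.

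Finally, rather than substitute this radical directly, the cleanest route is to use the above quadratic to reduce $x^{2}$ and $x^{3}$ modulo $x^{2}=(u^{3}+5u)x-8u^{2}$ and rewrite the intermediate identity in the affine form $Z=A(u)+B(u)\,x$ with $Z:=(PQ)^{2}+(49/PQ)^{2}$; substituting $x=(Z-A)/B$ into $x^{2}-(u^{3}+5u)x+8u^{2}=0$ then produces a polynomial identity of degree two in $Z$, which is exactly the shape of \eqref{7gm}. The main obstacle is the bookkeeping in this last step: one must verify by hand that the expansions of $A^{2}$, $AB(u^{3}+5u)$ and $8u^{2}B^{2}$ combine to yield the $Z$-coefficient $u^{9}+7u^{7}-37u^{5}-77u^{3}+294u$ and the constant term $-u^{12}-20u^{10}-86u^{8}-2065u^{6}+16317u^{4}-22981u^{2}$ displayed in \eqref{7gm}. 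This is lengthy but mechanical, and is where I expect the most effort and the greatest risk of arithmetic slips.
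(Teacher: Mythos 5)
Your proposal is correct and takes essentially the same route as the paper: the paper's proof is just the remark that the result follows from \eqref{lemma3} and \eqref{lemma12} by the method of Theorem~\ref{tbghge2}, which is exactly what you do (multiply \eqref{lemma3} at $q$ and at $q^{2}$, then eliminate the cross-term $x$ using \eqref{lemma12}). Your intermediate identity, the quadratic $x^{2}-(u^{3}+5u)x+8u^{2}=0$, and the final elimination all check out and reproduce the stated coefficients.
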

\begin{proof}
The proof our theorem can be obtained by \eqref{lemma3} and \eqref{lemma12}. Since the proof is analogous to the aforementioned theorem, and so we omit the details.
\end{proof}
\begin{theorem}
If $P=\displaystyle \frac{f(-q)f(-q^{4})}{q^{5/6}f(-q^{5})f(-q^{20})}$ and $ Q=\displaystyle \frac{f(-q^2)f(-q^{2})}{q^{2/3}f(-q^{10})f(-q^{10})},$ then,
\begin{eqnarray}\nonumber
\left(PQ+\frac{25}{PQ}\right)^2&=&\left(PQ+\frac{25}{PQ}\right)\left(\left(\left(\frac{P}{Q}\right)^3+\left(\frac{Q}{P}\right)^3\right)^3
+6\left(\left(\frac{P}{Q}\right)^3+\left(\frac{Q}{P}\right)^3\right)\right)-\left(\left(\frac{P}{Q}\right)^3+\left(\frac{Q}{P}\right)^3\right)^4\\&& \label{smsks}
-37\left(\left(\frac{P}{Q}\right)^3+\left(\frac{Q}{P}\right)^3\right)^2+64.
\end{eqnarray}
\end{theorem}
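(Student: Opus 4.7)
The plan is to mirror the strategy of Theorem~\ref{tbghge2}, with the level-$5$ identities \eqref{lemma2} and \eqref{lemma11} taking the roles played there by \eqref{lemma4} and \eqref{eihma}. Denoting by $\hat P, \hat Q$ the pair appearing in \eqref{lemma2}, I set $A := \hat P\hat Q$ and $B := \hat P/\hat Q$, and write $A', B'$ for the same quantities after the substitution $q \mapsto q^{2}$. Multiplying \eqref{lemma2} with its $q \mapsto q^{2}$ image and regrouping the four cross terms on each side gives
\begin{eqnarray*}
AA' + \tfrac{25}{AA'} + 5\Bigl(\tfrac{A}{A'}+\tfrac{A'}{A}\Bigr)
= (BB')^{3} + \tfrac{1}{(BB')^{3}} + \Bigl(\tfrac{B}{B'}\Bigr)^{3} + \Bigl(\tfrac{B'}{B}\Bigr)^{3}.
\end{eqnarray*}

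The heart of the argument is three identifications, each verified by cancelling $f$-factors and tracking $q$-powers: (i) $AA'$ equals the $PQ$ of the statement, (ii) $B/B'$ equals the $P/Q$ of the statement, and (iii) $A/A' = BB'$. Setting $y := BB' + 1/(BB')$ and $v := (P/Q)^{3} + (Q/P)^{3}$, the displayed identity then collapses (via $(BB')^{3}+(BB')^{-3}=y^{3}-3y$) to
\begin{eqnarray*}
PQ + \tfrac{25}{PQ} \;=\; y^{3} - 8y + v.
\end{eqnarray*}

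Next I would invoke \eqref{lemma11}, whose $(P,Q)$ is precisely the pair $(B, B')$; it therefore becomes the quadratic $y^{2} = yv + 4$. Using this to eliminate $y^{3} = yv^{2} + 4v + 4y$ transforms the above into $PQ + 25/PQ - 5v = y(v^{2} - 4)$. Squaring both sides, clearing the factor $(v^{2}-4)^{2}$, and applying $y^{2} = yv + 4$ once more eliminates $y$ altogether, leaving a polynomial identity in $v$ and $PQ + 25/PQ$ which on expansion reproduces \eqref{smsks} exactly.

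The main obstacle will be bookkeeping rather than any clever maneuver: I expect most of the work to lie in verifying the three coincidences $AA' = PQ$, $B/B' = P/Q$, and $A/A' = BB'$ with the precise $q$-exponents and $f$-products demanded by the statement. Once that alignment is secure, the remaining steps are routine polynomial manipulations following the template of Theorem~\ref{tbghge2}, and for that reason one could reasonably (as in the proof of \eqref{7gm}) omit the detailed calculation.
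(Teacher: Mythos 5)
Your proposal is correct and is essentially the paper's own (omitted) argument: multiply \eqref{lemma2} by its $q\mapsto q^2$ image, identify the cross terms via $AA'=PQ$, $B/B'=P/Q$, $A/A'=BB'$, and eliminate $y=BB'+1/(BB')$ using the quadratic \eqref{lemma11}, exactly as in the template of Theorem~\ref{tbghge2}. I checked the final elimination: $(T-5v)^2=(T-5v)v(v^2-4)+4(v^2-4)^2$ with $T=PQ+25/(PQ)$ expands precisely to \eqref{smsks}.
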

\begin{proof}
The proof our theorem can be obtained by \eqref{lemma2} and \eqref{lemma11}. Since the proof is analogous to Theorem \ref{tbghge2}, and so we omit the details.
\end{proof}
\begin{theorem}\label{namjfrt3e2}
  If  $P=\displaystyle \frac{f(-q)f(-q^{2})}{q^{1/4}f(-q^{3})f(-q^{6})}$  and  $ Q=\displaystyle \frac{f(-q^2)f(-q^{4})}{q^{1/2}f(-q^{6})f(-q^{12})}$, then,
\begin{eqnarray}\nonumber
\left(\left(PQ\right)^2+\left(\frac{9}{PQ}\right)^2\right)\left(\left(\frac{P}{Q}\right)^2+\left(\frac{Q}{P}\right)^2\right)
&=&\left(\left(\frac{P}{Q}\right)^2+\left(\frac{Q}{P}\right)^2\right)^4 \\ && -11\left(\left(\frac{P}{Q}\right)^2+\left(\frac{Q}{P}\right)^2\right)^2-8. \label{eillhma}
\end{eqnarray}
\end{theorem}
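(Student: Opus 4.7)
The plan is to emulate the proof of Theorem \ref{tbghge2}: combine the $q$-instance of a base identity with its $q\mapsto q^{2}$ transcription, then eliminate a residual auxiliary via a further mixed modular equation. Put
\[
A=\frac{f^{2}(-q)}{q^{1/6}f^{2}(-q^{3})},\quad B=\frac{f^{2}(-q^{2})}{q^{1/3}f^{2}(-q^{6})},\quad B'=\frac{f^{2}(-q^{4})}{q^{2/3}f^{2}(-q^{12})},
\]
and let $C,D$ denote the quantities of \eqref{lemma8}. A direct check shows $P^{2}=AB$, $Q^{2}=BB'$ and $(P/Q)^{2}=A/B'$, where $P,Q$ are those of the theorem. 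Abbreviate $w:=(P/Q)^{2}+(Q/P)^{2}$, $\gamma:=AB'/B^{2}$, and $\delta:=\gamma+1/\gamma$.

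Applying \eqref{lemma1} at $q$ and at $q^{2}$ gives $AB+9/(AB)=(A/B)^{3}+(B/A)^{3}$ and $BB'+9/(BB')=(B/B')^{3}+(B'/B)^{3}$. Multiplying these, the left side expands to $(PQ)^{2}+81/(PQ)^{2}+9w$. With $r=(A/B)^{3}$ and $s=(B/B')^{3}$, the right side is $(r+1/r)(s+1/s)=(rs+1/(rs))+(r/s+s/r)$; since $rs=(P/Q)^{6}$ and $r/s=\gamma^{3}$, this equals $(w^{3}-3w)+(\delta^{3}-3\delta)$. Hence one obtains the preliminary identity
\[
(PQ)^{2}+\frac{81}{(PQ)^{2}}=w^{3}-12w+\delta^{3}-3\delta.
\]

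It remains to express $\delta^{3}-3\delta$ in terms of $w$ alone. Taking $E,F$ to be the quantities of \eqref{lemma7.1}, one verifies $EF=CD$ and $(E/F)^{2}=(P/Q)^{2}$, so \eqref{lemma7.1} reads $CD+4/(CD)=w$; combined with \eqref{lemma8} written as $D^{4}=C^{2}((CD)^{2}+4)$ this forces $D^{3}=wC^{3}$. Next, reading \eqref{lemma5} with its local $P,Q$ chosen so that $PQ=C$ and $(P/Q)^{2}=A/B$ gives $C^{3}+8/C^{3}=1/r-r$, and the $q\mapsto q^{2}$ version gives $D^{3}+8/D^{3}=1/s-s$. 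Multiplying these, the right side becomes $(rs+1/(rs))-(r/s+s/r)=(w^{3}-3w)-(\delta^{3}-3\delta)$, while the left side, after substituting $D^{3}=wC^{3}$ and using $(CD)^{3}+64/(CD)^{3}=(CD+4/CD)^{3}-12(CD+4/CD)=w^{3}-12w$, collapses to $w^{3}-4w+8/w$. Equating yields $(\delta^{3}-3\delta)w=w^{2}-8$; substituting into the preliminary identity and multiplying through by $w$ produces precisely \eqref{eillhma}. The main obstacle is bookkeeping: one must correctly match the overloaded $P,Q$ of each cited lemma with the appropriate ratios in $A,B,B'$, after which the algebra is a short chain resting on the two cube-sum expansions above and the key extraction $D^{3}=wC^{3}$ from \eqref{lemma8}.
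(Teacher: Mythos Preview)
Your proof is correct and follows essentially the same route as the paper's own argument: both multiply \eqref{lemma1} at $q$ and at $q^{2}$ to obtain the preliminary identity, both multiply \eqref{lemma5} at $q$ and $q^{2}$, and both invoke \eqref{lemma8} together with \eqref{lemma7.1} to eliminate the residual auxiliary. The paper packages the auxiliary as $y^{6}+1/y^{6}$ (your $\delta^{3}-3\delta$) and $z=(D/C)^{3}$ (your $w$, via $D^{3}=wC^{3}$), deriving $z=CD+4/(CD)$ directly from \eqref{lemma8} by dividing through by $C^{3}D$ and then matching it to $w$ via \eqref{lemma7.1}; you reverse the order, getting $CD+4/(CD)=w$ from \eqref{lemma7.1} first and then reading off $D^{3}=wC^{3}$ from \eqref{lemma8}. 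These are the same computation rearranged.
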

\begin{proof}
The following equations can be obtained from \eqref{lemma1}, we have
\begin{eqnarray}
P^2+\frac{9}{P^2} &=& \left(\frac{q^{1/12}f(-q)f(-q^{6})}{f(-q^{2})f(-q^{3})}\right)^6
+\left(\frac{f(-q^{2})f(-q^{3})}{q^{1/12}f(-q)f(-q^{6})}\right)^6 \label{dpak}\\
Q^2+\frac{9}{Q^2} &=& \left(\frac{q^{1/6}f(-q^2)f(-q^{12})}{f(-q^{4})f(-q^{6})}\right)^6
+\left(\frac{f(-q^{4})f(-q^{6})}{q^{1/6}f(-q^2)f(-q^{12})}\right)^6 \label{dpaqk}
\end{eqnarray}
Now, multiplying \eqref{dpak} with \eqref{dpaqk}, we deduce that
\begin{eqnarray}\label{y68}
y^6+\frac{1}{y^6}&=& \left(PQ\right)^2+\left(\frac{9}{PQ}\right)^2+9\left(\frac{P}{Q}\right)^2+9\left(\frac{Q}{P}\right)^2
-\left(\frac{P}{Q}\right)^6-\left(\frac{Q}{P}\right)^6,
\end{eqnarray}
where, $\displaystyle y = \frac{f(-q)f(-q^4)f(-q^{6})f(-q^{6})}{q^{1/12}f(-q^{2})f(-q^{2})f(-q^{3})f(-q^{12})}$. \\ Similarly, the following equation can be obtained by \eqref{lemma5} and \eqref{lemma8}, we obtain that
\begin{eqnarray}\label{yhbg}
y^6+\frac{1}{y^6}&=& \left(\frac{P}{Q}\right)^6+\left(\frac{Q}{P}\right)^6-z^3+4z-8z^{-1}
\end{eqnarray}
where, $\displaystyle z = \left(\frac{f(-q^2)f(-q^2)f(-q^{6})f(-q^{6})}{q^{1/6}f(-q)f(-q^{4})f(-q^{3})f(-q^{12})}\right)^3$.\\
Employing \eqref{lemma7.1} in \eqref{lemma8}, we deduce that
\begin{eqnarray}\label{nxmhf}
z&=& \left(\frac{P}{Q}\right)^2+\left(\frac{Q}{P}\right)^2.
\end{eqnarray}
When combining \eqref{y68}, \eqref{yhbg}, and \eqref{nxmhf}, we arrive at the desired result.
\end{proof}
\begin{remark}
The alternative proof of the above theorem can be found in \cite{ms7}.
\end{remark}
\begin{theorem} \label{lema13th}
  If  $P=\displaystyle \frac{f(-q)}{q^{1/24}f(-q^{2})}$ and $ Q=\displaystyle \frac{f(-q^{13})}{q^{13/24}f(-q^{26})},$ then,
\begin{eqnarray*}\label{lema13}
\left(PQ\right)^6+\left(\frac{2}{PQ}\right)^6&=& \left(\frac{Q}{P}\right)^7-\left(\frac{P}{Q}\right)^7
-13\left(\left(\frac{Q}{P}\right)^5-\left(\frac{P}{Q}\right)^5\right)
+52\left(\left(\frac{Q}{P}\right)^3-\left(\frac{P}{Q}\right)^3\right)
-78\left(\frac{Q}{P}-\frac{P}{Q}\right).
\end{eqnarray*}
\end{theorem}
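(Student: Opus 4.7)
The plan is to mimic the strategy used in the proof of Theorem~\ref{tnbrde2}: parametrize $P$ and $Q$ in terms of the elliptic moduli and then invoke Ramanujan's degree-$13$ modular equation~\eqref{p2beta}--\eqref{p2meta}.

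First I would note that $f(-q)/f(-q^{2}) = \chi(-q)$ from the definitions in Section~1, so
$$P = q^{-1/24}\chi(-q), \qquad Q = q^{-13/24}\chi(-q^{13}).$$
Applying Entry~12 of Chapter~17 in~\cite{Berndt-notebook-3} (the same entry invoked in the proof of Theorem~\ref{tnbrde2}) to express $\chi(-q)$ and $\chi(-q^{13})$ in terms of $\alpha=k^{2}$ and $\beta=l^{2}$, where $\beta$ has degree $13$ over $\alpha$, yields
$$P = 2^{1/6}(1-\alpha)^{1/12}\alpha^{-1/24}, \qquad Q = 2^{1/6}(1-\beta)^{1/12}\beta^{-1/24}.$$
Consequently $PQ$ becomes a radical in $\alpha\beta$ and $(1-\alpha)(1-\beta)$ while $P/Q$ becomes a radical in the ratios $\beta/\alpha$ and $(1-\beta)/(1-\alpha)$; in particular $P^{24}=16(1-\alpha)^{2}/\alpha$ and $Q^{24}=16(1-\beta)^{2}/\beta$, so every integer power of $PQ$ and $P/Q$ appearing in the target identity is a rational function of $\alpha$ and $\beta$.

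Next I would eliminate the multiplier $m$ by multiplying~\eqref{p2beta} and~\eqref{p2meta}, producing a single polynomial relation $m\cdot(13/m)=13$ purely in $\alpha$ and $\beta$. This is Ramanujan's modular equation of degree $13$, and it supplies the algebraic constraint that will collapse the substituted form of the desired identity. An auxiliary simplification is the observation that, since $uv=1$ for $u=Q/P$ and $v=P/Q$, each difference $u^{2k+1}-v^{2k+1}$ on the right-hand side of~\eqref{lema13} is a polynomial in $t:=u-v$: explicitly $t^{3}+3t$, $t^{5}+5t^{3}+5t$ and $t^{7}+7t^{5}+14t^{3}+7t$. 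After substituting these on the right and $(PQ)^{6}+(2/PQ)^{6}$ on the left, one reduces the target to a single polynomial equation in $PQ$ and $t$ that must hold modulo the degree-$13$ modular relation.

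The main obstacle will be the size of the resulting verification: with $(PQ)^{6}$ on the left and powers of $Q/P-P/Q$ up to seven on the right, clearing radicals in $\alpha$ and $\beta$ produces a polynomial of large total degree, so the elimination step is realistic only with the aid of a symbolic algebra system. Nevertheless, once the degree-$13$ modular relation is imposed, the substituted identity should collapse to a trivial vanishing, completing the proof in the same spirit as Theorem~\ref{tnbrde2}.
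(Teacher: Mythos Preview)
Your approach is valid but differs from the paper's. Rather than rederiving the degree-$13$ relation from~\eqref{p2beta}--\eqref{p2meta}, the paper simply quotes Entry~41(ii) of Chapter~36 in~\cite{Berndt-notebook-5} (p.~378)---already an identity of exactly this shape, essentially the $\chi(q)$ (equivalently $G_n$) analogue of the present statement---transcribes it via Entries~12(i),(iii) of Chapter~17 in~\cite{Berndt-notebook-3}, and then replaces $q$ by $-q$ to pass from $\chi(q)$ to $\chi(-q)$; the whole argument is a two-line appeal to the literature. Your route is more self-contained and mirrors the strategy of Theorem~\ref{tnbrde2}, but note a genuine difference in difficulty: in that proof the auxiliary variables $R,K$ were engineered so that each of the radicals $(\beta/\alpha)^{1/4}$, $((1-\beta)/(1-\alpha))^{1/4}$, $(\beta(1-\beta)/\alpha(1-\alpha))^{1/6}$ appearing in~\eqref{p2beta}--\eqref{p2meta} is a \emph{monomial} in $R$ and $K$, so substitution immediately produces a polynomial identity. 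With your $P$ and $Q$ (each a function of a single modulus) no such monomial expressions are available, so you are forced into the heavier ``substitute both sides and reduce modulo the modular relation'' verification you describe, and the mixed $1/4$- and $1/6$-powers make the radical-clearing step genuinely large. It will go through with computer algebra, as you anticipate, but the paper's shortcut via the ready-made Entry~41(ii) sidesteps all of this.
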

\begin{proof}
Transcribing Entry 41(ii) of Chapter 36 \cite[p.378]{Berndt-notebook-5} by employing Entries 12(i),(iii) of Chapter 17 \cite[p.124]{Berndt-notebook-3}, and then changing $q$ to $-q$, we can obtain the desired result.
\end{proof}
\section{ Explicit evaluations of  $a_{m, n}$.}
In this section, we establish new general theorems for the explicit evaluations of Ramanujan's remarkable product of theta functions $a_{m, n}$ for $n = 3, 5, 7, 13, $ and even values of $m$. On applying these theorems, we compute several explicit evaluations of $a_{m, n}$.
\begin{theorem}\label{temjhgf2}
If $m$ is any positive rational, and
\begin{eqnarray*}
\Lambda &=& \left(\sqrt{2}g_{3m}g_{m/3}\right)^3\label{3k.0m},
\end{eqnarray*}
then,
\begin{eqnarray}
\frac{a_{m, 3}}{b_{4m, 3}}-\frac{b_{4m, 3}}{a_{m, 3}}&=& \Lambda \label{3m},\\
\frac{1}{a_{m, 3}b_{4m, 3}}-a_{m, 3}b_{4m, 3} &=&\frac{1}{9}\left(\frac{\Lambda^4+11\Lambda^2-8}{\Lambda}\right).\label{3.0m}
\end{eqnarray}
\end{theorem}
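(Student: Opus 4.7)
My plan is to rewrite both identities in Theorem~4.1 as polynomial identities in the eta quotients $P,Q$ of Theorem~\ref{namjfrt3e2} and the class invariants $g_{m/3},g_{3m},g_{4m/3},g_{12m}$, and then derive them by combining Theorem~\ref{namjfrt3e2} with \eqref{lemma8} and \eqref{lemma1}.

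Using $\psi(q)=f(-q^2)^2/f(-q)$ and $\varphi(-q^2)=f(-q^2)^2/f(-q^4)$ (both immediate from the infinite-product formulas in the introduction), a direct calculation with $q=e^{-\pi\sqrt{m/3}}$ gives
\[
b_{m,3}=\frac{3}{P^2},\qquad b_{4m,3}=\frac{3}{Q^2},\qquad a_{m,3}=3(PQ)^2 s^{12},\qquad a_{m,3}\,b_{m,3}\,b_{4m,3}=27\,s^{12},
\]
where $s:=q^{1/6}f(-q^6)/f(-q^2)$; hence $a_{m,3}/b_{4m,3}=P^2Q^4 s^{12}$ and $a_{m,3}b_{4m,3}=9P^2s^{12}$. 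Next, \eqref{eqgn1} shows $\sqrt 2\,g_{3m}g_{m/3}=P_L$ and $\sqrt 2\,g_{4m/3}g_{12m}=Q_L$, where $P_L,Q_L$ are the $P$- and $Q$-variables of \eqref{lemma8}, so $\Lambda=P_L^3$; moreover
\[
Ps^2=\frac{g_{m/3}}{g_{3m}}=:\xi,\qquad Qs^2=\frac{g_{12m}}{g_{4m/3}}=:\zeta,
\]
and the left-hand side of \eqref{3m} equals $\xi^2\zeta^4-\xi^{-2}\zeta^{-4}$.

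For \eqref{3m}, I combine \eqref{lemma8} (a polynomial identity between $g_{3m}g_{m/3}$ and $g_{4m/3}g_{12m}$ in terms of $P_L,Q_L$) with the degree-3 $P$-$Q$ modular equation \eqref{lemma1} specialised at nomes $q$ and $q^2$; using $P_{23}/Q_{23}=\xi^2$ and $P_{23}Q_{23}=3/b_{m,3}$ (and the analogues at $q^2$ for $\zeta$ and $b_{4m,3}$) these specialisations read
\[
\xi^{6}+\xi^{-6}=\frac{3(1+b_{m,3}^{2})}{b_{m,3}},\qquad \zeta^{6}+\zeta^{-6}=\frac{3(1+b_{4m,3}^{2})}{b_{4m,3}},
\]
and the resulting system determines $\xi^2\zeta^4-\xi^{-2}\zeta^{-4}$ as $P_L^3$. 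For \eqref{3.0m}, I apply Theorem~\ref{namjfrt3e2} with the substitutions
\[
(P/Q)^2+(Q/P)^2=\frac{b_{4m,3}}{b_{m,3}}+\frac{b_{m,3}}{b_{4m,3}},\qquad (PQ)^2+\frac{81}{(PQ)^2}=9\!\left(\frac{1}{b_{m,3}b_{4m,3}}+b_{m,3}b_{4m,3}\right),
\]
turning Theorem~\ref{namjfrt3e2} into a polynomial identity in $b_{m,3}$ and $b_{4m,3}$; eliminating $b_{m,3}$ and $s^{12}$ via \eqref{3m} and $a_{m,3}b_{m,3}b_{4m,3}=27s^{12}$, in favour of $a_{m,3}b_{4m,3}$ and $\Lambda$, then simplifies to \eqref{3.0m}.

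The main obstacle will be this final elimination step. Theorem~\ref{namjfrt3e2} is symmetric in $b_{m,3}\leftrightarrow b_{4m,3}$, whereas \eqref{3.0m} singles out the asymmetric product $a_{m,3}b_{4m,3}$ through $\Lambda$; matching the resulting expression with the stated rational function $(\Lambda^4+11\Lambda^2-8)/(9\Lambda)$, and choosing at each step the branch determined by the positive real values of all quantities at $q=e^{-\pi\sqrt{m/3}}$, is where the computation becomes genuinely intricate.
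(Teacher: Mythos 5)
Your dictionary is sound: with $q=e^{-\pi\sqrt{m/3}}$ and $P,Q$ as in Theorem~\ref{namjfrt3e2} one indeed has $b_{m,3}=3/P^2$, $b_{4m,3}=3/Q^2$, $a_{m,3}=3(PQ)^2s^{12}$, $\sqrt2\,g_{3m}g_{m/3}=P_L$, $Ps^2=g_{m/3}/g_{3m}$ and $Qs^2=g_{12m}/g_{4m/3}$. The problem is that the two derivations you build on this do not close, and for the same reason: nothing in your systems couples the $f(q)$--side of the story (the quantity $a_{m,3}$, equivalently the product $g_{3m}g_{m/3}$) to the $f(-q)$--side (the $b$'s and the quotients $\xi,\zeta$). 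For \eqref{3m}, your system consists of \eqref{lemma8}, which involves only $P_L$ and $Q_L$, and the two specialisations of \eqref{lemma1}, which involve only $(\xi,P)$ and $(\zeta,Q)$ respectively. Since $P_L,Q_L$ occur in no equation containing $\xi,\zeta,P,Q$, the variable $P_L$ is free relative to $(\xi,\zeta)$ and no elimination can yield $\xi^2\zeta^4-\xi^{-2}\zeta^{-4}=P_L^3$. The missing bridge is \eqref{lemma7.1}: its ratio $P/Q$ coincides with that of Theorem~\ref{namjfrt3e2} while its product equals $P_LQ_L$, so combining it with \eqref{lemma8} gives \eqref{nxmhf}, i.e. $(Q_L/P_L)^3=(P/Q)^2+(Q/P)^2$. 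This is the one identity that links the two worlds, and it is exactly what the paper uses: rewriting $(Q_L/P_L)^3$ by \eqref{entry24} and replacing $q$ by $-q$ turns it into $\Lambda$ (via \eqref{eqgn1}) while $(P/Q)^2+(Q/P)^2$ turns into $a_{m,3}/b_{4m,3}-b_{4m,3}/a_{m,3}$ (via \eqref{a55lta}), which is \eqref{3m}.

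The same defect sinks your route to \eqref{3.0m}. Applying Theorem~\ref{namjfrt3e2} at the positive nome produces a relation between $b_{m,3}$ and $b_{4m,3}$ only, and the elimination you propose is underdetermined: given $\Lambda$, one may choose $b_{m,3}$ freely, solve for $b_{4m,3}$ from that relation and for $a_{m,3}$ from \eqref{3m}, and let $a_{m,3}b_{m,3}b_{4m,3}=27s^{12}$ merely define $s^{12}$; hence $a_{m,3}b_{4m,3}$ is not a function of $\Lambda$ within your system. The paper instead applies \eqref{eillhma} with $q$ replaced by $-q$: then $(PQ)^2+(9/(PQ))^2$ becomes (up to a factor $i$) $9\bigl(1/(a_{m,3}b_{4m,3})-a_{m,3}b_{4m,3}\bigr)$ and $(P/Q)^2+(Q/P)^2$ becomes (up to $-i$) $\Lambda$ by \eqref{3m}, so \eqref{3.0m} drops out immediately. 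In short, your reformulation at the real nome is a correct but insufficient skeleton; the essential idea of the proof --- the passage $q\mapsto-q$ mediated by \eqref{entry24}, together with the bridge identity \eqref{lemma7.1} --- is absent, and without it neither half of the theorem follows from the lemmas you cite.
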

\begin{proof}
Applying \eqref{1lem} in \eqref{eq2} and \eqref{mu}, we deduce that
\begin{align}\label{a55lta}
\sqrt{\frac{n}{a_{m, n}}} =\frac{f(q)f(-q^{2})}{q^{(n-1)/8}f(q^{n})f(-q^{2n})}
\quad ; \quad
\sqrt{\frac{n}{b_{m, n}}} =\frac{f(-q)f(-q^{2})}{q^{(n-1)/8}f(-q^{n})f(-q^{2n})}.
\end{align}
Combining Entry 24(iii) and 24(iv)\cite[p.39]{Berndt-notebook-3}, we obtain that
\begin{eqnarray}\label{entry24}
\frac{f(q)}{f(-q^2)}&=& \frac{f(-q^2)f(-q^2)}{f(-q)f(-q^4)}.
\end{eqnarray}
Now, employing \eqref{entry24} in \eqref{nxmhf} and replaceing $q$ by $-q$ after that applying \eqref{eqgn1} and \eqref{a55lta} with $n = 3 $ and $q=e^{-\pi\sqrt{m/3}}$, we arrive at \eqref{3m}. Similarly, from \eqref{eillhma}, we arrive at \eqref{3.0m}.
\end{proof}
\begin{corollary}\label{coro248} We have
\begin{eqnarray*}
a_{2, 3} &=& \left(\sqrt{2}-1\right)\left(\sqrt{3}+\sqrt{2}\right)^{1/2},\\
a_{4, 3} &=& \left(\sqrt{2}+1\right)\left(\frac{\sqrt{3}-1}{\sqrt{2}}\right)^{5/2},\\
a_{8, 3} &=& \left(\sqrt{2}-1\right)^{3/2}\left(\sqrt{3}-\sqrt{2}\right)^{5/4}\left(\sqrt{\sqrt{3}+2}+\sqrt{\sqrt{3}+1}\right).
\end{eqnarray*}
\end{corollary}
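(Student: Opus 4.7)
The plan is to apply Theorem \ref{temjhgf2} separately for $m=2, 4, 8$. For each such $m$, the two identities (\ref{3m}) and (\ref{3.0m}) constitute a system of two equations in the two unknowns $A:=a_{m,3}$ and $B:=b_{4m,3}$, the right-hand sides being determined entirely by the parameter $\Lambda=(\sqrt{2}\,g_{3m}g_{m/3})^{3}$. Accordingly, the first step is to assemble the closed-form evaluations of the six class invariants $g_{6},g_{2/3},g_{12},g_{4/3},g_{24},g_{8/3}$, which are classical and appear in the Ramanujan--Weber tables reproduced in \cite{Berndt-notebook-5}; from these one writes the three values $\Lambda=\Lambda(m)$ as explicit surds. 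Note that $a_{m,3}$ is manifestly positive at $q=e^{-\pi\sqrt{m/3}}\in(0,1)$ by (\ref{eq2}), and so is $b_{4m,3}$.

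To disentangle the system for fixed $\Lambda$, I would introduce $X=A/B$ and $Y=AB$, so that (\ref{3m}) becomes $X-1/X=\Lambda$ and (\ref{3.0m}) becomes $1/Y-Y=\tfrac{1}{9}(\Lambda^{4}+11\Lambda^{2}-8)/\Lambda$. Each of these is a simple quadratic with a unique positive root; the positivity of $A$ and $B$ forces the correct branch in each case. The relation $A^{2}=XY$ then yields
\[
a_{m,3}=\sqrt{XY}
\]
in closed surd form, and it remains only to check that this expression agrees with the one asserted in the corollary.

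The main obstacle I expect is purely algebraic: the denesting at the end. Once the surd form of $\Lambda$ is inserted, $X$ and $Y$ are themselves iterated radicals, and the product $XY$ must be simplified to match the tidy expressions in the statement, e.g.\ $(\sqrt{2}-1)^{2}(\sqrt{3}+\sqrt{2})$ in the case $m=2$. For $m=2$ and $m=4$ the denesting should proceed by standard surd manipulations and pairing of conjugates. The genuinely delicate case is $m=8$, where the claimed value involves the nested sum $\sqrt{\sqrt{3}+2}+\sqrt{\sqrt{3}+1}$; recovering this from $\sqrt{XY}$ will require invoking a denesting identity of the form $\sqrt{a+\sqrt{b}}=\sqrt{(a+\sqrt{a^{2}-b})/2}+\sqrt{(a-\sqrt{a^{2}-b})/2}$, together with the reciprocation relations linking $g_{n}$ at arguments $n$ and $1/n$, in order to recognise the asserted nested-radical sum and confirm the correct sign branch.
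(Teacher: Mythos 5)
Your scheme for exploiting Theorem \ref{temjhgf2} is the right one and coincides with the paper's: setting $X=a_{m,3}/b_{4m,3}$ and $Y=a_{m,3}b_{4m,3}$, equations \eqref{3m} and \eqref{3.0m} become quadratics $X^2-\Lambda X-1=0$ and $Y^2+cY-1=0$, each with a unique positive root, and $a_{m,3}=\sqrt{XY}$; the branch selection and the surd denesting are indeed routine. The genuine gap is in your first step, the determination of $\Lambda=(\sqrt{2}\,g_{3m}g_{m/3})^3$. You assert that $g_{6},g_{2/3},g_{12},g_{4/3},g_{24},g_{8/3}$ are ``classical and appear in the Ramanujan--Weber tables''; they are not. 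The classical tables record $g_n$ only for even integers $n$, so $g_{2/3},g_{4/3},g_{8/3}$ are unavailable, and even after applying the reciprocity $g_ng_{4/n}=1$ they reduce to $g_6^{-1}$, $g_3^{-1}$ and $g_{3/2}^{-1}$, of which only the first is tabulated (the paper's closing remark explicitly notes that $g_n$ for odd $n$ is computed in only a few places). Thus your method produces $\Lambda$ only for $m=2$, where reciprocity gives $g_6g_{2/3}=1$ and hence $\Lambda=2\sqrt2$ at once; for $m=4$ and $m=8$ your proposal has no way to evaluate $\Lambda$, and this evaluation is precisely the nontrivial content of the paper's proof.

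What the paper does instead is bootstrap the \emph{products} $g_{3m}g_{m/3}$ without ever evaluating the individual invariants: transcribing the modular equation \eqref{lemma8} via \eqref{eqgn1} yields the identity \eqref{6th11113} relating $g_{4n}g_{36n}$ to $g_ng_{9n}$. Taking $n=1/3$ and using $g_3g_{1/3}=(g_{12}g_{4/3})^{-1}$ turns \eqref{6th11113} into a cubic for $(g_{12}g_{4/3})^2$, giving $g_{12}g_{4/3}=2^{1/3}$ and hence $\Lambda=4\sqrt2$ for $m=4$; taking $n=2/3$ and using $g_6g_{2/3}=1$ gives $(g_{24}g_{8/3})^2=1+\sqrt3$ and the corresponding $\Lambda$ for $m=8$. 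You would need to supply this (or an equivalent) mechanism before your computation can begin; as written, the proposal cannot reach $a_{4,3}$ or $a_{8,3}$.
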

\begin{proof}
Letting $m=2$, $g_6g_{2/3}=1$ by Corollary 2.2.4 \cite{y1} and  employing the value in \eqref{3m} and \eqref{3.0m}, then solving quadratic equations  and choosing the appropriate root, we obtain $a_{2, 3}$. Now, applying \eqref{eqgn1} in \eqref{lemma8}, we obtain that
\begin{eqnarray}\label{6th11113}
\left(g_{4n}g_{36n}\right)^4-2\left(g_{4n}g_{36n}\right)^2\left(g_{n}g_{9n}\right)^4-2\left(g_{n}g_{9n}\right)^2 &=& 0.
\end{eqnarray}
Applying $n=1/3$ in \eqref{6th11113} and employing the relation $g_3g_{1/3}=\left(g_{12}g_{4/3}\right)^{-1}$, it can be obtained by Corollary 2.2.4 \cite{y1}, then, we deduce the following
\begin{align}\label{alta}
g_{12}g_{4/3} = 2^{1/3},
\quad \mbox{and also}\quad
\Lambda =4\sqrt{2}.
\end{align}
By applying \eqref{3m} and \eqref{3.0m} with $m =4$, then, we have $a_{4, 3}$. The explicit values of $g_{24}g_{8/3}$ can be evaluated  by \eqref{6th11113} with $n = 2/3$ .  As the proof of $a_{8, 3}$ being similar to the proof of the $a_{4, 3}$. So we omit the details.
\end{proof}
\begin{corollary}\label{cjor} We have
\begin{eqnarray*}
a_{10, 3} &=& \left(\sqrt{2}-1\right)^2\left(\sqrt{6}+\sqrt{5}\right)^{1/2}\left(\frac{\sqrt{3}+1}{\sqrt{2}}\right)
\left(\frac{\sqrt{5}-1}{2}\right)^3\label{103}.
\end{eqnarray*}
\end{corollary}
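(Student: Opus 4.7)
The strategy is entirely parallel to the proof of Corollary \ref{coro248}: apply Theorem \ref{temjhgf2} with $m=10$, which requires evaluating $\Lambda = (\sqrt{2}\, g_{30}\, g_{10/3})^3$, and then solve the resulting two quadratic relations \eqref{3m}--\eqref{3.0m} for $a_{10,3}$ and $b_{40,3}$. The principal task is therefore the closed-form evaluation of the product $g_{30}\, g_{10/3}$.

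My approach to this invariant is to combine \eqref{6th11113} (the $g_n$-form of \eqref{lemma8}) with the reciprocity relations from Corollary 2.2.4 of \cite{y1}. Specialising \eqref{6th11113} at $n=5/3$ yields a quartic equation linking $(g_{20/3}g_{60})^2$ to $(g_{5/3}g_{15})^2$; after chaining in the classical values $g_6 g_{2/3} = 1$, $g_{12}g_{4/3}=2^{1/3}$ (already used above) together with the standard invariant $g_5$ (whose twelfth power is $(\sqrt{5}-1)/2$, up to the usual normalization), everything reduces to radicals. The expected reduction produces $\sqrt{2}\,g_{30}g_{10/3}$ as a product containing the factors $(\sqrt{5}-1)/2$ and $(\sqrt{\sqrt{6}+\sqrt{5}})^{1/3}$, which when cubed give the $(\sqrt{5}-1)^3/8$ and $(\sqrt{6}+\sqrt{5})^{1/2}$ appearing in the stated closed form for $a_{10,3}$.

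Once $\Lambda$ is explicit, set $U = a_{10,3}/b_{40,3}$ and $V = a_{10,3}\, b_{40,3}$. Then \eqref{3m} becomes the quadratic $U^2 - \Lambda\, U - 1 = 0$, while \eqref{3.0m} becomes $9\Lambda\, V^2 + (\Lambda^4 + 11\Lambda^2 - 8)\,V - 9\Lambda = 0$. Both are solved in radicals; positivity of the theta-function products $a_{m,n}$ and $b_{m,n}$ selects the appropriate sign at each branch. Finally $a_{10,3}^2 = UV$, and extracting the positive square root and grouping the factors $(\sqrt{2}-1)^2$, $(\sqrt{3}+1)/\sqrt{2}$, $((\sqrt{5}-1)/2)^3$ and $(\sqrt{6}+\sqrt{5})^{1/2}$ reproduces the claimed expression.

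The main obstacle is the first step. For $m\in\{2,4,8\}$ only the primes $2$ and $3$ entered the computation, so a single instance of \eqref{6th11113} together with the reciprocity $g_n g_{1/n}$ sufficed to pin down $g_{3m}g_{m/3}$. At $m=10$ we must bridge between level-$3$ and level-$5$ invariants, which forces either a two-step reduction using the classical degree-$5$ modular equation for $g_n$ or a direct mixed modular equation relating $\chi(q^3), \chi(q^5), \chi(q^{15}), \chi(q^{30})$ drawn from Berndt's notebooks. Root-selection among the several algebraic branches that appear must be confirmed numerically at $q=e^{-\pi\sqrt{10/3}}$ before the final simplification is attempted.
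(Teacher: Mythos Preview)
Your outer framework matches the paper exactly: set $m=10$ in Theorem~\ref{temjhgf2}, compute $\Lambda=(\sqrt{2}\,g_{30}g_{10/3})^3$, solve the two quadratics \eqref{3m} and \eqref{3.0m} for $U=a_{10,3}/b_{40,3}$ and $V=a_{10,3}b_{40,3}$, and take $a_{10,3}=\sqrt{UV}$. The gap is in the only nontrivial step, the evaluation of $g_{30}g_{10/3}$.

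Your proposed route through \eqref{6th11113} does not reach the target. That identity relates $g_{4n}g_{36n}$ to $g_{n}g_{9n}$; at $n=5/3$ it links $g_{20/3}g_{60}$ with $g_{5/3}g_{15}$, neither of which is $g_{30}g_{10/3}$. To hit the desired product you would need $n=5/6$, which only trades $g_{30}g_{10/3}$ for the equally unknown $g_{5/6}g_{15/2}$. The subsequent ``chaining'' with $g_6g_{2/3}$, $g_{12}g_{4/3}$ and $g_5$ does not close the loop (and $g_5$ for odd index is itself non-classical; it is $G_5$ that is tabulated). You in effect concede this in your final paragraph.

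The paper avoids the issue entirely. It takes the tabulated value $g_{30}=(\sqrt{5}+2)^{1/6}(\sqrt{10}+3)^{1/6}$ from Ramanujan's list and feeds it into a \emph{different} relation, namely the $g_n$-form of \eqref{lemma5} (Lemma~2.4(i)),
\[
2\sqrt{2}\Bigl((g_{3n}g_{n/3})^3+(g_{3n}g_{n/3})^{-3}\Bigr)=\Bigl(\tfrac{g_{3n}}{g_{n/3}}\Bigr)^6-\Bigl(\tfrac{g_{n/3}}{g_{3n}}\Bigr)^6,
\]
at $n=10$. This is a single polynomial equation in $g_{10/3}$ alone, giving $g_{10/3}=(\sqrt{5}-2)^{1/6}(\sqrt{10}+3)^{1/6}$ and hence $g_{30}g_{10/3}=(\sqrt{10}+3)^{1/3}$, so that $\Lambda=2\sqrt{2}(\sqrt{10}+3)$. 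From here \eqref{3m}--\eqref{3.0m} yield the stated value with no level-$5$ modular equation and no numerical branch selection needed. The missing idea in your attempt is to use \eqref{lemma5} rather than \eqref{lemma8}, together with the already-known $g_{30}$, to isolate $g_{10/3}$ directly.
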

\begin{proof}
Employing \eqref{eqgn1} in \eqref{lemma5} with $q=e^{-\pi \sqrt{n/3}}$, we deduce that
\begin{eqnarray}\label{g3n}
2\sqrt{2}\left(\left(g_{3n}g_{n/3}\right)^3+\left(g_{3n}g_{n/3}\right)^{-3}\right)
&=& \left( \frac{g_{3n}}{g_{n/3}}\right)^6 - \left( \frac{g_{n/3}}{g_{3n}}\right)^6.
\end{eqnarray}
From the table in Chapter 34 of Ramanujan’s notebooks \cite[p.200]{Berndt-notebook-5}, we have
\begin{eqnarray}
g_{30} &=& \left(\sqrt{5}+2\right)^{1/6}\left(\sqrt{10}+3\right)^{1/6}.\label{1thec003}
\end{eqnarray}
Using \eqref{1thec003} in \eqref{g3n} with $n = 10$, we verify that
\begin{eqnarray}
g_{10/3} &=& \left(\sqrt{5}-2\right)^{1/6}\left(\sqrt{10}+3\right)^{1/6}.\label{2thec003}
\end{eqnarray}
Employing \eqref{1thec003} and \eqref{2thec003} in \eqref{3m}, then solving quadratic equation, and choosing the appropriate root, we deduce that
\begin{eqnarray}
\frac{a_{10, 3}}{b_{40, 3}}&=& 2\sqrt{5}+3\sqrt{2}+\sqrt{39+12\sqrt{10}}.\label{2theco003}
\end{eqnarray}
By $(9.5)$ \cite[ p.284]{Berndt-notebook-3}, we have
\begin{eqnarray}
\sqrt{39+12\sqrt{10}} &=& \sqrt{15}+2\sqrt{6}.\label{2themmco003}
\end{eqnarray}
From \eqref{3.0m}, the following equation is obtained
\begin{eqnarray}
\frac{1}{a_{10, 3}b_{40, 3}}-a_{10, 3}b_{40, 3}&=& \frac{136 \sqrt{10}+432}{\sqrt{2}}.\label{2t03}
\end{eqnarray}
Solving \eqref{2t03} and choosing the convenient root, we deduce that
\begin{eqnarray}
a_{10, 3}b_{40, 3}&=& \left(\sqrt{2}-1\right)^4\left(\frac{\sqrt{5}-1}{2}\right)^6.\label{k003}
\end{eqnarray}
Combining \eqref{2theco003}, \eqref{2themmco003}, and \eqref{k003}, we arrive at the desired result.
\end{proof}
\begin{corollary} We have
\begin{eqnarray*}
a_{6, 3} &=& \frac{1}{3}\left(\sqrt{3}-\sqrt{2}\right)\left(\sqrt{2}+1\right)^{1/2}\left(\frac{\sqrt{3}+1}{\sqrt{2}}\right),\\
a_{14, 3} &=& \left(\sqrt{2}+1\right)\left(\sqrt{3}-\sqrt{2}\right)^2\left(\sqrt{8}-\sqrt{7}\right)\left(\sqrt{7}+\sqrt{6}\right)^{1/2},\label{143}\\
a_{26, 3} &=& \left(\sqrt{2}-1\right)^4\left(\sqrt{3}+\sqrt{2}\right)\left(\sqrt{26}-5\right)\left(3\sqrt{3}+\sqrt{26}\right)^{1/2},\label{263}\\
a_{34, 3} &=&\left(\sqrt{2}-1\right)^3\left(\sqrt{17}-4\right)^2\left(\sqrt{51}+5\sqrt{2}\right)^{1/2}\left(\frac{\sqrt{3}+1}{\sqrt{2}}\right)^2. \label{343}
\end{eqnarray*}
\end{corollary}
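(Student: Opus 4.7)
The plan is to handle each of the four values $m \in \{6, 14, 26, 34\}$ separately, following verbatim the template established in Corollary~\ref{cjor}. For each such $m$, an application of Theorem~\ref{temjhgf2} reduces the task to (i) computing $\Lambda = (\sqrt{2}\,g_{3m}\,g_{m/3})^{3}$ in closed form, and (ii) solving the coupled pair \eqref{3m}--\eqref{3.0m} for $a_{m,3}/b_{4m,3}$ and for the product $a_{m,3}\,b_{4m,3}$; the target value $a_{m,3}$ is then extracted by multiplication and a positive square root.

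For step (i), I would read off $g_{18}, g_{42}, g_{78}, g_{102}$ directly from the tables in Chapter~34 of \cite{Berndt-notebook-5}; the remaining invariant needed at $m=6$ is the classical $g_{2}=1$. The companion invariants $g_{14/3}$, $g_{26/3}$, $g_{34/3}$ are not recorded there, so I would obtain them exactly as $g_{10/3}$ was obtained in Corollary~\ref{cjor}: substitute the known value of $g_{3n}$ (taking $n=m/3$ in each case) into \eqref{g3n}, which becomes an algebraic equation for the ratio $g_{3n}/g_{n/3}$, and select the positive real root that matches the numerical value of $g_{n/3}$.

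For step (ii), equation \eqref{3m} is linear in the antisymmetric combination $a_{m,3}/b_{4m,3} - b_{4m,3}/a_{m,3}$ and yields that ratio after solving a single quadratic with the branch forced by $a_{m,3},b_{4m,3}>0$. Equation \eqref{3.0m} is then a quadratic in the product $a_{m,3}\,b_{4m,3}$, and its correct root is again dictated by positivity and by the range $0<q<1$. Multiplying the ratio by the product and extracting the positive square root yields $a_{m,3}$.

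The principal obstacle is the denesting of the resulting surds into the compact closed forms displayed in the statement: the intermediate expressions arrive in the shape $\sqrt{A+B\sqrt{D}}$ and must be recognized as combinations such as $(\sqrt{7}+\sqrt{6})^{1/2}$, $(3\sqrt{3}+\sqrt{26})^{1/2}$, or $(\sqrt{51}+5\sqrt{2})^{1/2}$. This will require denesting identities of exactly the flavor of $\sqrt{39+12\sqrt{10}}=\sqrt{15}+2\sqrt{6}$ used in Corollary~\ref{cjor} (compare (9.5) of \cite{Berndt-notebook-3}); since each such simplification is a purely algebraic verification, I would confirm them numerically to high precision, which also serves to select the correct sign branch at every step. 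The case $m=34$ is expected to be the most laborious, since both $g_{102}$ and the derived $g_{34/3}$ are already fairly intricate surds, and the power $(\sqrt{2}-1)^{3}(\sqrt{17}-4)^{2}$ predicts that the denesting cascade goes through a composite field.
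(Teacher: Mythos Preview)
Your proposal is correct and follows essentially the same route as the paper: the paper's own proof simply says to employ the class invariants $g_n$ for $n=2,18,42,78,102$ from \cite[pp.~200--202]{Berndt-notebook-5} in Theorem~\ref{temjhgf2} and to proceed analogously to Corollary~\ref{cjor}, which is exactly the template you have spelled out (including the use of \eqref{g3n} to obtain the companion invariants $g_{m/3}$ and the denesting of the resulting surds).
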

\begin{proof}
Employing class invariant for $m = 2, 18, 42, 78, $ and $ 102$ \cite[p.200-202]{Berndt-notebook-5} in Theorem \ref{temjhgf2}, we obtain all the those values mentioned. Since the proof is analogous to the previous corollary,  and so we omit the details.
\end{proof}
\begin{remark}
The different proof of $a_{2, 3}$ and $a_{6, 3}$ can be found in \cite{ms7} and \cite{djpkrk} respectively.
\end{remark}
\begin{theorem}\label{t5mce2}
If $m$ is any positive rational, and
\begin{eqnarray*}
\Lambda &=& \left(\frac{g_{5m}}{g_{m/5}}\right)^3,
\end{eqnarray*}
then,
\begin{eqnarray*}
\left(\sqrt{\frac{a_{m, 5}}{b_{4m, 5}}}-\sqrt{\frac{b_{4m, 5}}{a_{m, 5}}}\right)^2&=&
\left(\sqrt{\frac{a_{m, 5}}{b_{4m, 5}}}-\sqrt{\frac{b_{4m, 5}}{a_{m, 5}}}\right)\left(\Lambda-\frac{1}{\Lambda}\right)-4,\\
25\left(\frac{1}{\sqrt{a_{m, 5}b_{4m, 5}}}-\sqrt{a_{m, 5}b_{4m, 5}}\right)^2&=& 5\left(\frac{1}{\sqrt{a_{m, 5}b_{4m, 5}}}-\sqrt{a_{m, 5}b_{4m, 5}}\right)\left(\left(\Lambda-\frac{1}{\Lambda}\right)^3
-6\left(\Lambda-\frac{1}{\Lambda}\right)\right) \\ && +\left(\Lambda-\frac{1}{\Lambda}\right)^4-37\left(\Lambda-\frac{1}{\Lambda}\right)^2-64.
\end{eqnarray*}
\end{theorem}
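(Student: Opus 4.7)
The plan is to mimic Theorem \ref{temjhgf2}. At degree $5$ we have two $P$--$Q$ modular equations at hand: the known \eqref{lemma11} and the newly established \eqref{smsks}. A direct check shows that both share the same ratio
\[
\frac{P}{Q}=\frac{f(-q)f(-q^{4})f(-q^{10})^{2}}{q^{1/6}f(-q^{2})^{2}f(-q^{5})f(-q^{20})},
\]
while the corresponding products factor, via \eqref{a55lta} with $n=5$ applied at the bases $q$ and $q^{2}$, as $PQ=\sqrt{b_{4m,5}/b_{m,5}}$ for \eqref{lemma11} and $PQ=5/\sqrt{b_{m,5}\,b_{4m,5}}$ for \eqref{smsks}. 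The first equation of Theorem \ref{t5mce2} will come from \eqref{lemma11}, and the second from \eqref{smsks}.

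The main transformation is the substitution $q\mapsto -q$. For the common ratio, applying \eqref{entry24} to rewrite the $f(q)$ and $f(q^{5})$ that appear in $P(-q)/Q(-q)$ causes an almost complete cancellation, leaving $(P/Q)\big|_{q\to -q}=(-1)^{-1/6}\Lambda^{1/3}$ after using \eqref{eqgn1}. Consequently $\big((P/Q)^{3}+(Q/P)^{3}\big)\big|_{q\to -q}=-i(\Lambda-1/\Lambda)$ in the branch $(-1)^{1/2}=i$. For the products, $\sqrt{5/b_{m,5}}\big|_{q\to -q}=-i\sqrt{5/a_{m,5}}$ (from the $q^{1/2}$) and $\sqrt{5/b_{4m,5}}\big|_{q\to -q}=-\sqrt{5/b_{4m,5}}$ (from the $q^{1}$), so that $PQ\big|_{q\to -q}$ becomes $i\sqrt{b_{4m,5}/a_{m,5}}$ for \eqref{lemma11} and $5i/\sqrt{a_{m,5}\,b_{4m,5}}$ for \eqref{smsks}.

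Substituting these identifications into \eqref{lemma11} and \eqref{smsks} and expanding the brackets, the imaginary units always appear with even total exponent and collapse to real signs; after multiplying through by $-1$ where necessary, one recovers precisely the two equations of Theorem \ref{t5mce2}. As in Theorem \ref{temjhgf2}, I would omit the routine algebraic expansion.

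The main obstacle is the branch bookkeeping for the fractional powers $q^{1/6},q^{1/2},q^{2/3},q^{5/6}$ under $q\mapsto -q$. The cleanest verification is that every transformed term carries an even total number of $i$'s, so the resulting identities are independent of the branch chosen for $(-1)^{1/2}$ and are therefore genuine algebraic identities in $a_{m,5},b_{4m,5},\Lambda$; this parity check also pins down the sign of the constants $-4$ in the first equation and $-64$ in the second.
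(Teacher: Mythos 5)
Your proposal is correct and follows essentially the same route as the paper: the paper likewise derives Theorem \ref{t5mce2} by transcribing \eqref{lemma11} and \eqref{smsks} (which share the common ratio $P/Q$ you identify), replacing $q$ by $-q$, and using \eqref{entry24}, \eqref{eqgn1} and \eqref{a55lta} exactly as in the proof of Theorem \ref{temjhgf2}. Your identifications $PQ\mapsto i\sqrt{b_{4m,5}/a_{m,5}}$, $PQ\mapsto 5i/\sqrt{a_{m,5}b_{4m,5}}$ and $(P/Q)^3+(Q/P)^3\mapsto -i(\Lambda-1/\Lambda)$ check out and reproduce both displayed equations, including the constants $-4$ and $-64$.
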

\begin{proof}
The proof of theorem can be obtained by \eqref{lemma11} and \eqref{smsks} with $n =5 $ and $q=e^{-\pi\sqrt{m/5}}$. Since the proof is analogous to Theorem \ref{temjhgf2}, and so, we omit the details.
\end{proof}
\begin{corollary} We have
\begin{eqnarray*}
a_{2, 5} &=& \left(\sqrt{2}+1\right)\left(\frac{\sqrt{5}-1}{2}\right)^3,\\
a_{4, 5} &=& \left(\sqrt{2}+1\right)^{1/2}\left(\sqrt{10}+3\right)^{1/2}\left(\sqrt{\frac{45\sqrt{5}+103}{4}}-\sqrt{\frac{45\sqrt{5}+99}{4}}\right),\\
a_{6, 5} &=&\left(\sqrt{2}-1\right)^2\left(\sqrt{10}-3\right)\left(\frac{\sqrt{3}+1}{\sqrt{2}}\right)
\left(\frac{\sqrt{5}+\sqrt{3}}{\sqrt{2}}\right),\\
a_{8, 5} &=& \left(\sqrt{2}-1\right)^{5/2}\left(\frac{\sqrt{5}-1}{2}\right)^{9/2}\left(\sqrt{2\sqrt{10}+7}+\sqrt{2\sqrt{10}+6}\right),\\
a_{14, 5} &=&\left(\sqrt{2}-1\right)^3\left(\sqrt{10}-3\right)^2\left(\frac{3+\sqrt{7}}{\sqrt{2}}\right)
\left(\frac{\sqrt{7}+\sqrt{5}}{\sqrt{2}}\right),\\
a_{26, 5} &=&\left(\sqrt{2}+1\right)^2\left(\sqrt{10}+3\right)\left(\sqrt{65}-8\right)^2\left(\frac{\sqrt{13}-3}{2}\right)^2,\\
a_{38, 5} &=&\left(\sqrt{2}-1\right)^8\left(2\sqrt{5}+19\right)\left(\sqrt{19}+3\sqrt{2}\right)\left(\frac{\sqrt{5}-1}{2}\right)^9.
\end{eqnarray*}
\end{corollary}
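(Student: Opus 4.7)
The plan is to follow the template set by Corollary \ref{cjor}: for each $m \in \{2, 4, 6, 8, 14, 26, 38\}$ I would compute the parameter $\Lambda = (g_{5m}/g_{m/5})^3$, substitute it into the two identities of Theorem \ref{t5mce2}, and solve the resulting system for the ratio $a_{m,5}/b_{4m,5}$ and the product $a_{m,5}\,b_{4m,5}$; the stated closed form for $a_{m,5}$ then follows from $a_{m,5} = \sqrt{(a_{m,5}/b_{4m,5})\cdot(a_{m,5}\,b_{4m,5})}$.

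First, for each $m$ I would determine the class invariants $g_{5m}$ and $g_{m/5}$. The values $g_{10}, g_{30}, g_{70}, g_{130}, g_{190}$ appear in the table on pages 200--202 of \cite{Berndt-notebook-5}, and $g_{20}, g_{40}$ can be derived from the duplication-type relations in \cite{y1}. The ``small-argument'' invariants $g_{m/5}$ are not directly tabulated, but they can be extracted from \eqref{lemma6}: at $q = e^{-\pi\sqrt{m/5}}$ the substitutions $P = 2^{1/4} g_{m/5}$ and $Q = 2^{1/4} g_{5m}$ recast \eqref{lemma6} as a polynomial relation between $g_{5m}\,g_{m/5}$ and $g_{5m}/g_{m/5}$; combined with the known value of $g_{5m}$, this pins down $g_{m/5}$ after solving a single-variable polynomial. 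This step exactly parallels the derivation of $g_{10/3}$ from $g_{30}$ via \eqref{g3n} in Corollary \ref{cjor}.

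With $\Lambda$ fixed, the first identity of Theorem \ref{t5mce2} is a quadratic in $\sqrt{a_{m,5}/b_{4m,5}} - \sqrt{b_{4m,5}/a_{m,5}}$; solving it, and then solving a further quadratic in $\sqrt{a_{m,5}/b_{4m,5}}$, produces the ratio. Similarly, the second identity is a quadratic in $1/\sqrt{a_{m,5}\,b_{4m,5}} - \sqrt{a_{m,5}\,b_{4m,5}}$ and delivers the product. In each case the correct root is pinned down by the positivity of $a_{m,5}$ and $b_{4m,5}$ together with a numerical check against the leading $q$-expansion in \eqref{eq2} and \eqref{mu}.

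The main obstacle will be the surd simplification. The raw outputs of the quadratic formula involve doubly- and triply-nested radicals, and matching them to the target forms, in particular the expression $\sqrt{2\sqrt{10}+7}+\sqrt{2\sqrt{10}+6}$ appearing in $a_{8,5}$ and the large mixed radical in $a_{4,5}$, requires denesting identities of the same flavour as \eqref{2themmco003}, that is equation (9.5) of \cite[p.\ 284]{Berndt-notebook-3}. Once $\Lambda$ is identified exactly for each $m$, the remaining manipulations are mechanical but lengthy; this is precisely why the authors describe the proof as analogous to the previous corollary and omit the details.
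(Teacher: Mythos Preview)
Your proposal is correct and follows essentially the same route as the paper: apply Theorem~\ref{t5mce2} with the needed class-invariant input, solve the two resulting quadratics, and denest. The only minor difference is in how the $m=2,4,8$ inputs are obtained: the paper cites \eqref{lemma9} (together with the reciprocal relations from \cite{y1}, mirroring Corollary~\ref{coro248}) to produce the required products $g_{5m}g_{m/5}$ directly, whereas you propose first finding $g_{20}$ and $g_{40}$ from duplication relations in \cite{y1} and then recovering $g_{m/5}$ via \eqref{lemma6}; both paths lead to the same $\Lambda$.
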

\begin{proof}
The aforementioned values can be obtained by Theorem \ref{t5mce2}, \eqref{lemma9}, \eqref{lemma6}, and the explicit values of $g_n$ for $n = 30, 70, 130,$ and $ 190$ \cite[p.200-203]{Berndt-notebook-5}. Since the proof is analogous to the  Corollary \ref{coro248}, and Corollary \ref{cjor}  and so, we omit the details.
\end{proof}
\begin{remark}
The alternative proof of $a_{2, 5}$ can be found in \cite{ms8}.
\end{remark}
\begin{theorem}\label{te7m2}
If $m$ is any positive rational, and
\begin{eqnarray*}
\Lambda &=& \left(\frac{g_{7m}}{g_{m/7}}\right)^2,
\end{eqnarray*}
then,
\begin{eqnarray*}
\left(\frac{a_{m, 7}}{b_{4m, 7}}-\frac{b_{4m, 7}}{a_{m, 7}}\right)^2&=&
\left(\frac{a_{m, 7}}{b_{4m, 7}}-\frac{b_{4m, 7}}{a_{m, 7}}\right)\left(\left(\Lambda-\frac{1}{\Lambda}\right)^3-5\left(\Lambda-\frac{1}{\Lambda}\right)\right)-8\left(\Lambda-\frac{1}{\Lambda}\right)^2,\\
2401\left(\frac{1}{a_{m, 7}b_{4m, 7}}-a_{m, 7}b_{4m, 7}\right)^2&=&49\left(\frac{1}{a_{m, 7}b_{4m, 7}}-a_{m, 7}b_{4m, 7}\right)\left(\left(\Lambda-\frac{1}{\Lambda}\right)^9-7\left(\Lambda-\frac{1}{\Lambda}\right)^7-37\left(\Lambda-\frac{1}{\Lambda}\right)^5
\right.\\ &&\left.+77\left(\Lambda-\frac{1}{\Lambda}\right)^3+294\left(\Lambda-\frac{1}{\Lambda}\right)\right)+\left(\Lambda-\frac{1}{\Lambda}\right)^{12}
-20\left(\Lambda-\frac{1}{\Lambda}\right)^{10}\\ &&+86\left(\Lambda-\frac{1}{\Lambda}\right)^8-2065\left(\Lambda-\frac{1}{\Lambda}\right)^6
-16317\left(\Lambda-\frac{1}{\Lambda}\right)^4-22981\left(\Lambda-\frac{1}{\Lambda}\right)^2.
\end{eqnarray*}
\end{theorem}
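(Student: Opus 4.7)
The plan is to follow the template of Theorems \ref{temjhgf2} and \ref{t5mce2}, using the $n=7$ analogues of the key modular equations: identity \eqref{lemma12} for the first equation of Theorem \ref{te7m2}, and \eqref{7gm} for the second. Throughout I set $q = e^{-\pi\sqrt{m/7}}$.

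For each identity, the translation proceeds in three stages. First, a direct computation using \eqref{a55lta} with $n=7$ identifies $PQ$ (in \eqref{lemma12}) with $\sqrt{b_{4m,7}/b_{m,7}}$ and $(PQ)^{2}$ (in \eqref{7gm}) with $49/(b_{m,7}b_{4m,7})$; the fractional $q$-powers cancel cleanly. Second, using \eqref{eqgn1} together with the Weber-type relation $g_{4n} = 2^{1/4}g_{n}G_{n}$ (itself a consequence of \eqref{entry24}), the ratio $(P/Q)^{2}$ is recognized as $(G_{7m}/G_{m/7})^{2}$. Third, replacing $q$ by $-q$ sends $b_{m,7}$ to an imaginary multiple of $a_{m,7}$ by the definitions \eqref{eq2} and \eqref{mu} (the imaginary factor coming from $q^{(n-1)/4}$ with $n=7$), and interchanges $g_{n}$ with $G_{n}$ up to a $48$-th root of unity that cancels in ratios; the resulting identifications place \eqref{lemma12} and \eqref{7gm} into the form of Theorem \ref{te7m2}, the accumulated complex phases converting the sums $(PQ)^{2}+(1/PQ)^{2}$ and $(PQ)^{2}+(49/PQ)^{2}$ into the differences $a_{m,7}/b_{4m,7}-b_{4m,7}/a_{m,7}$ and $1/(a_{m,7}b_{4m,7})-a_{m,7}b_{4m,7}$ required on the left-hand sides of the two equations. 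The remaining symmetric combinations $u^{\pm 2k}+u^{\mp 2k}$ on the other side can then be rewritten in terms of the differences $u-u^{-1}$ via the elementary relation $u^{2}+u^{-2}=(u-u^{-1})^{2}+2$ and its higher analogues.

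The main obstacle will be the combination of sign tracking and sheer length of the expansion for the second equation, where \eqref{7gm} contains terms of degree up to $12$ in $(P/Q)^{2}+(Q/P)^{2}$: matching the precise coefficients of $(\Lambda-1/\Lambda)^{k}$ listed in Theorem \ref{te7m2} requires a long but purely mechanical computation, and verifying that the complex phases arising from the $q\to -q$ substitution combine consistently requires care. Both types of subtleties are exactly parallel to those handled in the proofs of Theorems \ref{temjhgf2} and \ref{t5mce2}, and following the author's convention for such routine calculations, I would summarize the final expansion rather than display every intermediate identity.
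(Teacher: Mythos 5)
Your overall route is the paper's: both equations of Theorem \ref{te7m2} are obtained by transcribing \eqref{lemma12} and \eqref{7gm} with $q=e^{-\pi\sqrt{m/7}}$, identifying the $PQ$-combinations with $b_{m,7}$ and $b_{4m,7}$ via \eqref{a55lta}, the $P/Q$-combinations with class invariants via \eqref{entry24} and \eqref{eqgn1}, and then replacing $q$ by $-q$. Your identifications $PQ=\sqrt{b_{4m,7}/b_{m,7}}$, $(PQ)^2=49/(b_{m,7}b_{4m,7})$ and $(P/Q)^2=(G_{7m}/G_{m/7})^2$ are all correct.

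The gap is in the phase analysis. The $48$th root of unity produced by $q\mapsto -q$ does \emph{not} cancel in the ratio $G_{7m}/G_{m/7}$: writing $P/Q=q^{-1/4}\,f(-q^2)f(q^7)/\bigl(f(q)f(-q^{14})\bigr)$, the substitution $q\mapsto -q$ yields $(-1)^{-1/4}\,g_{7m}/g_{m/7}$, because the two invariants carry $q^{7/24}$ and $q^{1/24}$ respectively and the phases do not match. Hence $(P/Q)^2+(Q/P)^2$ transforms into $\mp i\left(\Lambda-\Lambda^{-1}\right)$, a \emph{difference}, exactly as the $PQ$-combinations do. This residual phase is essential: if it cancelled as you claim, the odd-degree symmetric sums $\Lambda^{k}+\Lambda^{-k}$ (coming from $(P/Q)^{2k}+(Q/P)^{2k}$ with $k=1,3$ in \eqref{lemma12} and $k=1,3,5,7,9$ in \eqref{7gm}) would appear, and these are not polynomials in $\Lambda-\Lambda^{-1}$; your proposed reduction $u^{2}+u^{-2}=(u-u^{-1})^{2}+2$ handles only the even powers, while $\Lambda+\Lambda^{-1}=\sqrt{(\Lambda-\Lambda^{-1})^{2}+4}$ would leave irremovable square roots, so the stated identities could not be recovered. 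Once the factor $(-1)^{-1/4}$ is kept, the odd powers become $\pm i\left(\Lambda^{k}-\Lambda^{-k}\right)$, which are polynomials in $\Lambda-\Lambda^{-1}$, and the powers of $i$ cancel against those on the left-hand side; this is precisely what produces the signs of the coefficients $-20$, $+86$, $-2065$, $-16317$, $-22981$ in the second equation. With that correction your argument closes and coincides with the paper's (omitted) computation.
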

\begin{proof}
The proof of theorem can be obtained by \eqref{lemma12}, and \eqref{7gm} with $n =7 $ and $q=e^{-\pi\sqrt{m/7}}$. Since the proof is analogous to Theorem \ref{temjhgf2}, and so, we omit the details.
\end{proof}
\begin{corollary} We have
\begin{eqnarray*}
a_{2, 7} &=& \left(\sqrt{8}+\sqrt{7}\right)^{1/4}\left(\frac{3+\sqrt{7}}{\sqrt{2}}\right)^{1/2}
\left(\sqrt{\frac{\sqrt{2}+3}{2}}-\sqrt{\frac{\sqrt{2}+1}{2}}\right)^2,\\
a_{4, 7} &=& \left(\sqrt{8}+\sqrt{7}\right)\left(\frac{3-\sqrt{7}}{\sqrt{2}}\right)^{5/2}, \\
a_{6, 7} &=& \left(\sqrt{3}-\sqrt{2}\right)^2\left(2\sqrt{7}-3\sqrt{3}\right)\left(\sqrt{7}+\sqrt{6}\right)^{1/2}
\left(\frac{\sqrt{3}+1}{\sqrt{2}}\right)^2,\\
a_{10, 7} &=& \left(\sqrt{10}-3\right)^2\left(3\sqrt{14}+5\sqrt{5}\right)^{1/2}\left(\frac{\sqrt{7}+\sqrt{5}}{\sqrt{2}}\right)
\left(\frac{\sqrt{5}-1}{2}\right)^6.
\end{eqnarray*}
\end{corollary}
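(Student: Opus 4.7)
The plan is to apply Theorem \ref{te7m2} in turn with $m = 2,\,4,\,6,\,10$, mirroring the strategy used for the degree $3$ and degree $5$ corollaries. For each fixed $m$, once the parameter
\[
\Lambda = \left(\frac{g_{7m}}{g_{m/7}}\right)^{2}
\]
is in hand, the first identity of the theorem is a quadratic in $a_{m,7}/b_{4m,7} - b_{4m,7}/a_{m,7}$ and the second is a quadratic in $1/(a_{m,7}b_{4m,7}) - a_{m,7}b_{4m,7}$. Choosing roots consistent with the numerical value of $q = e^{-\pi\sqrt{m/7}}$ yields both $a_{m,7}/b_{4m,7}$ and $a_{m,7}b_{4m,7}$; taking the positive square root of their product recovers $a_{m,7}$.

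Supplying $\Lambda$ requires the class invariants $g_{7m}$ and $g_{m/7}$. For $m = 2,\,6,\,10$ the invariants $g_{14},\,g_{42},\,g_{70}$ are tabulated in \cite[pp.\,200--203]{Berndt-notebook-5}, and their reciprocal companions $g_{m/7}$ are obtained from the standard relation $g_{n}g_{1/n}=1$ used in Corollary \ref{coro248} and Corollary \ref{cjor}. For $m = 4$ neither $g_{28}$ nor $g_{4/7}$ appears directly in Berndt's table; however, applying \eqref{eqgn1} to \eqref{lemma10} with $q = e^{-\pi\sqrt{n/7}}$ produces a polynomial identity between $g_{7n}g_{n/7}$ and $g_{28n}g_{4n/7}$ entirely analogous to \eqref{6th11113}, from which $g_{28}g_{4/7}$ is deduced starting from the known $g_{7}g_{1/7}$. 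The same derived relation simultaneously pins down the individual quotient $g_{28}/g_{4/7}$ needed for $\Lambda$.

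Once $\Lambda$ is numerically explicit, the first identity of Theorem \ref{te7m2} solves to a surd expression for $a_{m,7}/b_{4m,7}$, whose radical part is denested via Ramanujan's identity \cite[(9.5), p.\,284]{Berndt-notebook-3} already invoked in Corollary \ref{cjor}. The second identity becomes a quadratic in $a_{m,7}b_{4m,7}$ whose suitable root, after a similar denesting, matches the advertised product; assembling the two pieces and taking the positive square root gives $a_{m,7}$ in each of the four cases. As in the earlier corollaries, the correct branch at every sign choice is determined by checking the leading behaviour $a_{m,7} \to 1$ as $m \to \infty$ against the series expansion of the defining product.

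The principal obstacle is purely computational rather than conceptual. With $\Lambda$ already a layered surd in $\sqrt{2},\,\sqrt{7},\,\sqrt{3},\,\sqrt{5}$, the right-hand sides of the two identities in Theorem \ref{te7m2} involve polynomials of degrees up to $12$ in $\Lambda - 1/\Lambda$, so the resulting discriminants are voluminous expressions that must be denested into squares of short surds to reproduce the compact closed forms stated. Since the mechanism is identical to that of Corollary \ref{coro248} and Corollary \ref{cjor}, the verbose intermediate manipulations can legitimately be suppressed, exactly as the authors do elsewhere in the paper.
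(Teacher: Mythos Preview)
Your overall strategy matches the paper's, but there is a concrete gap in how you propose to obtain the companion invariants $g_{m/7}$. The identity you invoke, ``$g_{n}g_{1/n}=1$,'' is false; the relation actually used in Corollary~\ref{coro248} is Yi's $g_{n}g_{4/n}=1$ (so that $g_{6}g_{2/3}=1$ because $6\cdot\tfrac{2}{3}=4$). Even with the correct statement, this only helps for $m=2$, since $14\cdot\tfrac{2}{7}=4$ but $42\cdot\tfrac{6}{7}=36$ and $70\cdot\tfrac{10}{7}=100$. Thus for $m=6,10$ you have no route from $g_{42},g_{70}$ to $g_{6/7},g_{10/7}$, and hence no way to form $\Lambda=(g_{7m}/g_{m/7})^{2}$.

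The paper closes this gap with \eqref{lemma7}: applying \eqref{eqgn1} with $q=e^{-\pi\sqrt{m/7}}$ turns it into
\[
2\sqrt{2}\Bigl((g_{7m}g_{m/7})^{3}+(g_{7m}g_{m/7})^{-3}\Bigr)=\Bigl(\tfrac{g_{7m}}{g_{m/7}}\Bigr)^{4}+\Bigl(\tfrac{g_{m/7}}{g_{7m}}\Bigr)^{4}-7,
\]
the degree-$7$ analogue of \eqref{g3n}, from which $g_{m/7}$ is recovered once $g_{7m}$ is known. This is exactly what Corollary~\ref{cjor} does for degree $3$, not an inversion identity. The paper also supplements with Lemma~9.11 of \cite[p.\,292]{Berndt-notebook-5}, which you omit; together with \eqref{lemma10} it supplies the remaining input for the $m=2$ and $m=4$ cases. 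Once you replace your inversion claim by \eqref{lemma7} in this way, the rest of your outline (solving the two quadratics from Theorem~\ref{te7m2}, denesting, selecting the branch) is correct and coincides with the paper's argument.
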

\begin{proof}
The aforementioned values can be obtained by Theorem \ref{te7m2}, \eqref{lemma10}, \eqref{lemma7}, Lemma 9.11 \cite[p.292]{Berndt-notebook-5}, and the explicit values of $g_n$ for $n = 42, $ and $ 70$ \cite[p.201]{Berndt-notebook-5}. Since the proof is analogous to the  Corollary \ref{coro248}, and Corollary \ref{cjor} and so, we omit the details.
\end{proof}
\begin{theorem}\label{te2}
If $m$ is any positive rational, and
\begin{eqnarray*}\label{123}
\Lambda &=& \frac{g_{13m}}{g_{m/13}},
\end{eqnarray*}
then,
\begin{eqnarray}\nonumber
\left(\sqrt{\frac{a_{m, 13}}{b_{4m, 13}}}-\sqrt{\frac{b_{4m, 13}}{a_{m, 13}}}\right)^2&=&
\left(\sqrt{\frac{a_{m, 13}}{b_{4m, 13}}}-\sqrt{\frac{b_{4m, 13}}{a_{m, 13}}}\right)
\left(\left(\Lambda-\frac{1}{\Lambda}\right)^3-\left(\Lambda-\frac{1}{\Lambda}\right)\right)\\ && -4\left(\Lambda-\frac{1}{\Lambda}\right)^2-4, \label{13m}\\ \nonumber
169\left(\frac{1}{\sqrt{a_{m, 13}b_{4m, 13}}}-\sqrt{a_{m, 13}b_{4m, 13}}\right)^2&=&
13\left(\frac{1}{\sqrt{a_{m, 13}b_{4m, 13}}}-\sqrt{a_{m, 13}b_{4m, 13}}\right)
\left(\left(\Lambda-\frac{1}{\Lambda}\right)^9\right.\\ \nonumber &&\left.+\left(\Lambda-\frac{1}{\Lambda}\right)^7-21\left(\Lambda-\frac{1}{\Lambda}\right)^5
-35\left(\Lambda-\frac{1}{\Lambda}\right)^3+30\left(\Lambda-\frac{1}{\Lambda}\right)\right)
\\ &&  +\left(\Lambda-\frac{1}{\Lambda}\right)^{12}-4\left(\Lambda-\frac{1}{\Lambda}\right)^{10}-26\left(\Lambda-\frac{1}{\Lambda}\right)^8 -89\left(\Lambda-\frac{1}{\Lambda}\right)^6  \nonumber\\ && -829\left(\Lambda-\frac{1}{\Lambda}\right)^4
-1821\left(\Lambda-\frac{1}{\Lambda}\right)^2-576. \label{13m1}
\end{eqnarray}
\end{theorem}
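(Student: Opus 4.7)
The plan is to follow the template of the proof of Theorem \ref{temjhgf2}, but starting from the two degree-$13$ mixed modular equations \eqref{eihma} and \eqref{eihma1} established in Theorems \ref{tnbrde2} and \ref{tbghge2}. Concretely, I will specialise $q = e^{-\pi\sqrt{m/13}}$ in each identity and match the two symmetric combinations that appear there, namely $PQ \pm (PQ)^{-1}$ and $P/Q \pm Q/P$, with expressions in $a_{m,13}$, $b_{4m,13}$ and $\Lambda = g_{13m}/g_{m/13}$. Identity \eqref{eihma} will produce \eqref{13m}, and the higher-degree identity \eqref{eihma1} will produce \eqref{13m1}.

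For the $\Lambda$-side of the identification, I will use \eqref{eqgn1} together with the factorisation $\chi(-q) = f(-q)/f(-q^2)$. Since $q^{13} = e^{-\pi\sqrt{13m}}$, a direct calculation gives $\Lambda^{-1} = q^{1/2} f(-q) f(-q^{26})/(f(-q^{2}) f(-q^{13}))$, which is precisely the $P$ of Theorem \ref{tnbrde2}. A parallel manipulation of $Q$, combined with the substitution $q \mapsto -q$ needed to convert the stray $f(q)$ and $f(q^{13})$ factors back to their $f(-q^j)$ counterparts via \eqref{entry24}, shows that the symmetric polynomial $P/Q + Q/P$ in both \eqref{eihma} and \eqref{eihma1} simplifies to $\Lambda - \Lambda^{-1}$; this accounts for the minus sign visible on the right-hand sides of \eqref{13m} and \eqref{13m1}.

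For the $a/b$-side, I will invoke \eqref{a55lta} with $n = 13$, and then again with $q$ replaced by $q^{2}$ (noting that $q^{2} = e^{-\pi\sqrt{4m/13}}$) to produce $\sqrt{13/b_{4m,13}}$. Using \eqref{entry24} to bridge the $f(q)$ and $f(q^{13})$ factors created by the $q \mapsto -q$ step, the combination $PQ + (PQ)^{-1}$ in Theorem \ref{tnbrde2} matches $\sqrt{a_{m,13}/b_{4m,13}} - \sqrt{b_{4m,13}/a_{m,13}}$. The coefficient $169$ appearing in \eqref{13m1} arises because the corresponding combination in \eqref{eihma1} is $PQ + 169(PQ)^{-1}$ rather than $PQ + (PQ)^{-1}$, and the factor $13$ on the right-hand sides of \eqref{13m} and \eqref{13m1} is simply the normalising power from \eqref{a55lta}. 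Substituting these two pairs of identifications back into \eqref{eihma} and \eqref{eihma1} then produces \eqref{13m} and \eqref{13m1}.

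The principal obstacle is not conceptual but notational: \eqref{eihma1} is a polynomial identity of degree up to twelve in $P/Q + Q/P$, and the substitution $q \mapsto -q$ preserves every even power of this quantity while flipping the sign of every odd power. Verifying that the precise sign pattern of the twelve coefficients displayed on the right-hand side of \eqref{13m1} is reproduced by this parity split, and that the half-integer powers of $q$ in $P$ and $Q$ are reconciled with the square-root signs in \eqref{13m}, is the place where the calculation becomes genuinely lengthy. Once this bookkeeping is carried out, however, the reduction is entirely analogous to the proof of Theorem \ref{temjhgf2}, and the remaining routine manipulations may be omitted.
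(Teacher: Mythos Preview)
Your proposal is correct and follows essentially the same route as the paper: the paper's own proof simply says that \eqref{13m} and \eqref{13m1} are obtained from the mixed modular equations \eqref{eihma} and \eqref{eihma1} by setting $q=e^{-\pi\sqrt{m/13}}$ and arguing analogously to Theorem~\ref{temjhgf2}, which is exactly the template you outline (using \eqref{eqgn1}, \eqref{entry24}, and \eqref{a55lta} to identify $P/Q+Q/P$ with the $\Lambda$-side and $PQ+(PQ)^{-1}$ with the $a_{m,13},b_{4m,13}$-side after the $q\mapsto -q$ parity step). Your additional remarks on the sign bookkeeping under $q\mapsto -q$ are accurate and go somewhat beyond what the paper spells out.
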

\begin{proof}
The proof of the theorem can be obtained by \eqref{eihma} and \eqref{eihma1} with $n =13 $ and $q=e^{-\pi\sqrt{m/13}}$. Since the proof is analogous to Theorem \ref{temjhgf2}, and so, we omit the details.
\end{proof}
\begin{corollary} We have
\begin{eqnarray*}
a_{6, 13} &=& \left(\sqrt{3}+\sqrt{2}\right)\left(\sqrt{13}+2\sqrt{3}\right)\left(\sqrt{2}-1\right)^4
\left(\frac{\sqrt{13}-3}{2}\right)^3.
\end{eqnarray*}
\end{corollary}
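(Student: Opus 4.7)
The plan is to specialize Theorem \ref{te2} at $m=6$. Writing $\Lambda=g_{78}/g_{6/13}$, the two displayed relations of that theorem become a quadratic in $\sqrt{a_{6,13}/b_{24,13}}-\sqrt{b_{24,13}/a_{6,13}}$ (from \eqref{13m}) and a quadratic in $1/\sqrt{a_{6,13}b_{24,13}}-\sqrt{a_{6,13}b_{24,13}}$ (from \eqref{13m1}). Solving each and selecting the correct root by positivity and the standard $q\to 0^{+}$ asymptotics of $a_{m,n}$ and $b_{m,n}$ yields the ratio and the product separately; then
\[
a_{6,13}=\sqrt{a_{6,13}/b_{24,13}}\cdot\sqrt{a_{6,13}\,b_{24,13}}
\]
produces the desired value, exactly as in the proofs of Corollaries \ref{coro248} and \ref{cjor}.

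The preparatory task is the closed-form evaluation of $\Lambda$. The invariant $g_{78}$ is already tabulated in Ramanujan's notebooks \cite[p.200-202]{Berndt-notebook-5} (it was in fact invoked earlier in the evaluation of $a_{26,3}$). For $g_{6/13}$ I would invoke Theorem \ref{lema13th}: under $q=e^{-\pi\sqrt{n/13}}$, formula \eqref{eqgn1} gives $P=2^{1/4}g_{n/13}$ and $Q=2^{1/4}g_{13n}$, so the identity of Theorem \ref{lema13th} transcribes into a polynomial relation between the product $g_{13n}g_{n/13}$ and the ratio $g_{13n}/g_{n/13}$, in precise analogy with the way \eqref{lemma5} yielded \eqref{g3n} at degree $3$. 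Setting $n=6$ and substituting the known value of $g_{78}$ reduces this identity to a single-variable polynomial in $g_{6/13}$, whose unique positive real root is the invariant we need; dividing then gives $\Lambda$.

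The chief obstacle is the surd simplification. The right-hand side of \eqref{13m1} is a degree-$12$ polynomial in $\Lambda-1/\Lambda$, and once the explicit (already nested) value of $\Lambda$ is substituted the resulting expression is a deeply nested radical. Recognising the compact factorisation
\[
(\sqrt{3}+\sqrt{2})(\sqrt{13}+2\sqrt{3})(\sqrt{2}-1)^{4}\bigl((\sqrt{13}-3)/2\bigr)^{3}
\]
will rely on denesting identities in the spirit of $(9.5)$ of \cite[p.284]{Berndt-notebook-3} already exploited in Corollary \ref{cjor}. Beyond this computational bookkeeping the argument is structurally identical to the earlier corollaries, which is presumably why the paper leaves it to the reader.
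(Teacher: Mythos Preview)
Your outline is correct and follows the paper's own argument essentially step for step: the paper likewise transcribes Theorem \ref{lema13th} via \eqref{eqgn1} into a relation between $g_{13n}g_{n/13}$ and $g_{13n}/g_{n/13}$ (the degree-$13$ analogue of \eqref{g3n}), inserts the tabulated $g_{78}$ at $n=6$ to extract $g_{6/13}=(\sqrt{26}+5)^{1/6}\bigl((\sqrt{13}-3)/2\bigr)^{1/2}$, and then feeds $\Lambda$ into \eqref{13m} and \eqref{13m1} to obtain $\sqrt{a_{6,13}/b_{24,13}}=(\sqrt{3}+\sqrt{2})(\sqrt{13}+2\sqrt{3})$ and $\sqrt{a_{6,13}b_{24,13}}=(\sqrt{2}-1)^{4}\bigl((\sqrt{13}-3)/2\bigr)^{3}$, whose product is the stated value. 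The paper does in fact spell out this computation rather than leaving it to the reader, but your reconstruction of the method is accurate.
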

\begin{proof}
Employing \eqref{eqgn1} in Theorem \ref{lema13th}, we can deduce, after simplification,
\begin{eqnarray}\nonumber
8\left(\left(g_{13n}g_{n/13}\right)^6+\left(g_{13n}g_{n/13}\right)^{-6}\right) &=& \left(\frac{g_{13n}}{g_{n/13}}-\frac{g_{n/13}}{g_{13n}}\right)^7-6\left(\frac{g_{13n}}{g_{n/13}}-\frac{g_{n/13}}{g_{13n}}\right)^5
+\left(\frac{g_{13n}}{g_{n/13}}-\frac{g_{n/13}}{g_{13n}}\right)^3\\ && \label{13} +20\left(\frac{g_{13n}}{g_{n/13}}-\frac{g_{n/13}}{g_{13n}}\right)
\end{eqnarray}
From the table in Chapter 34 of Ramanujan’s notebooks \cite[p.202]{Berndt-notebook-5},
\begin{eqnarray}
g_{78} &=& \left(\sqrt{26}+5\right)^{1/6}\left(\frac{\sqrt{13}+3}{2}\right)^{1/2}.\label{1tooec003}
\end{eqnarray}
Using \eqref{1tooec003} in \eqref{13} with $n = 6$, we find that
\begin{eqnarray}
g_{6/13} &=&  \left(\sqrt{26}+5\right)^{1/6}\left(\frac{\sqrt{13}-3}{2}\right)^{1/2}.\label{ec003}
\end{eqnarray}
Employing \eqref{1tooec003} and \eqref{ec003} in \eqref{13m}, with $m=6$, then solving quadratic equation and choosing the appropriate root, we deduce that
\begin{eqnarray}
\sqrt{\frac{a_{6, 13}}{b_{24, 13}}}&=&\left(\sqrt{3}+\sqrt{2}\right)\left(\sqrt{13}+2\sqrt{3}\right).\label{213theco003}
\end{eqnarray}
By \eqref{13m1}, we obtain that
\begin{eqnarray}
\sqrt{a_{6, 13}b_{24, 13}}&=& \left(\sqrt{2}-1\right)^4
\left(\frac{\sqrt{13}-3}{2}\right)^3.\label{k113003}
\end{eqnarray}
Combining \eqref{213theco003}, and \eqref{k113003}, we arrive at the desired result.
\end{proof}
\begin{corollary} We have
\begin{eqnarray*}
a_{10, 13} &=&\left(\sqrt{26}+5\right)\left(\sqrt{2}+1\right)^2\left(\sqrt{65}-8\right)^2\left(\frac{\sqrt{5}-1}{2}\right)^9
\end{eqnarray*}
\end{corollary}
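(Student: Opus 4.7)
The plan is to imitate the proof of the previous corollary (for $a_{6,13}$) with $m=10$ throughout, using Theorem \ref{te2} as the main engine and equation \eqref{13} as the bridge between the needed class invariants. The required inputs are the two invariants $g_{130}$ and $g_{10/13}$, together with two quadratic extractions analogous to \eqref{213theco003} and \eqref{k113003}.

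First I would look up $g_{130}$ from the table in Chapter 34 of Ramanujan's notebooks \cite[p.202]{Berndt-notebook-5}; the target value on the right-hand side, which contains the factors $\sqrt{26}+5$, $\sqrt{65}-8$, and $(\sqrt{5}-1)/2$, strongly suggests that $g_{130}$ will factor into radicals involving $\sqrt{5}$ and $\sqrt{26}$. Next I would substitute this $g_{130}$ into \eqref{13} with $n=10$, regarded as an equation in the single unknown $g_{130}g_{10/13}$ (and hence in $g_{10/13}$): this is entirely parallel to the deduction of \eqref{ec003} from \eqref{1tooec003}. Choosing the positive real root compatible with the monotone behaviour of $g_n$ as $n \to 0$ then pins down $g_{10/13}$ exactly.

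With $g_{130}$ and $g_{10/13}$ in hand, I would form $\Lambda = g_{130}/g_{10/13}$ and substitute into the two identities of Theorem \ref{te2} specialised to $m=10$. Identity \eqref{13m} becomes a quadratic in $\sqrt{a_{10,13}/b_{40,13}}$, which I would solve and then select the root consistent with the ordering coming from $q = e^{-\pi\sqrt{10/13}}$, producing an expression for $\sqrt{a_{10,13}/b_{40,13}}$ in closed form (I expect a factor of $(\sqrt{26}+5)(\sqrt{65}-8)^2$ to emerge here, matching \eqref{213theco003} stylistically). Identity \eqref{13m1} similarly yields a quadratic in $\sqrt{a_{10,13}\,b_{40,13}}$, whose admissible root should simplify to a power of $(\sqrt{2}+1)$ times a power of $(\sqrt{5}-1)/2$, paralleling \eqref{k113003}. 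Multiplying the two square roots gives $a_{10,13}$ directly.

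The main obstacle is the exact radical arithmetic, especially in two places. First, solving the septic relation \eqref{13} for $g_{10/13}$ requires recognising the correct factorisation; one should expect the linear factor detected in Step~2 to be forced by the shape of $g_{130}$, in analogy with the $n=6$ case. Second, the simplification inside the two quadratics — in particular showing that the discriminants collapse to perfect squares in $\mathbb{Q}(\sqrt{2},\sqrt{5},\sqrt{13},\sqrt{26},\sqrt{65})$ and that the resulting expressions fold into the compact product on the right-hand side — is the most delicate step. Once these surd identities are checked (of a type identical to the denesting used via $(9.5)$ of \cite[p.284]{Berndt-notebook-3} in Corollary \ref{cjor}), the combination $\sqrt{a_{10,13}/b_{40,13}} \cdot \sqrt{a_{10,13}\,b_{40,13}} = a_{10,13}$ produces exactly the claimed formula.
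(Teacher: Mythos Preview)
Your proposal is correct and follows essentially the same approach as the paper: look up $g_{130}$, recover $g_{10/13}$ from \eqref{13} with $n=10$, feed $\Lambda=g_{130}/g_{10/13}$ into the two quadratics \eqref{13m} and \eqref{13m1} of Theorem~\ref{te2} with $m=10$, and multiply the resulting square roots. The paper's own proof is a one-line pointer to exactly this computation (``analogous to the previous corollary''), so your outline matches it in every essential respect.
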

\begin{proof}
Employing the explicit value of $g_{130}$ \cite[p.203]{Berndt-notebook-5} in Theorem \ref{te2} with $m=10$, we obtain desired result. Since the proof is analogous to the previous corollary,  and so we omit the details.
\end{proof}
We conclude the present work with some remarks. The mixed modular equations in theorems \ref{tnbrde2} - \ref{namjfrt3e2}, we  can be obtain general theorems for the explicit evaluations of $b_{m, n}$ and $b_{4m, n}$ involving class invariant $G_n$. Ramanujan recorded enormous explicit values of $G_n$ for odd values of $n$ and $g_n$ for even values of $n$ \cite[p. 189-204]{Berndt-notebook-5}. We utilize the values of $G_n$ to calculate several explicit values of $b_{m, n}$, for $m$ and $n$ are odd. On application of these values, we can calculate several explicit values of $g_n$ for odd value of $n$ by using the some identities. These identities can be found in \cite{3ms2}. Finally, we conclude that only a few works \cite{ms9,y1} computed $g_n$ for odd value of $n$.


\begin{thebibliography}{00}

\bibitem{Berndt-notebook-3}  Berndt, B.C.: {\it Ramanujan's notebooks. Part III}, Springer-Verlag, New York, 1991.

\bibitem{Berndt-notebook-4} Berndt, B.C.: {\it Ramanujan's notebooks. Part IV}, Springer-Verlag, New York, 1994.

\bibitem{Berndt-notebook-5} Berndt, B.C.: {\it Ramanujan's notebooks. Part V}, Springer-Verlag, New York, 1998.

\bibitem{be5} Berndt, B.C., Chan, H.H., Zhang, L.-C.:  Ramanujan's remarkable product of theta-functions, Proc. Edinburgh Math. Soc. (2) {\bf 40} (1997), no.~3, 583--612. doi:10.1017/S0013091500024032

\bibitem{ms9} Mahadeva Naika, M.S., Bairy, K.S.:  On some new explicit evaluations of class invariants, Vietnam J. Math. {\bf 36} (2008), no.~1, 103--124.

\bibitem{ms7} Mahadeva Naika, M.S., Bairy, K.S., Suman, N.P.:  Certain modular relations for remarkable product of theta-functions, Proc. Jangjeon Math. Soc. {\bf 17} (2014), no.~3, 317--331.

\bibitem{ms8} Mahadeva Naika, M.S., Chandankumar, S., Harish, M.: On some new $P$-$Q$ mixed modular equations, Ann. Univ. Ferrara Sez. VII Sci. Mat. {\bf 59} (2013), no.~2, 353--374. doi:10.1007/s11565-013-0183-y

\bibitem{ms2} Mahadeva Naika, M.S., Dharmendra, B.N.:  On some new general theorems for the explicit evaluations of Ramanujan's remarkable product of theta-functions, Ramanujan J. {\bf 15} (2008), no.~3, 349--366. doi:10.1007/s11139-007-9081-1

\bibitem{3ms2} Mahadeva Naika, M.S., Maheshkumar, M.C., Bairy, K.S.: On some remarkable product of theta-function, Aust. J. Math. Anal. Appl. {\bf 5} (2008), no.~1, Art. 13, 1--15.

\bibitem{djpkrk} Prabhakaran, D.J., Ranjithkumar, K.: The explicit formulae and evauations of Ramanujan's remarkable product of theta-functions, In press. doi:10.11650/tjm/190706

\bibitem{sr1} Ramanujan, S.: Notebooks (2 volumes), Tata Institudte of Fundamental Research, Bombay 1957.

\bibitem{vasukirk} Vasuki, K.R., Shivashankara, K.: A note on explicit evaluations of products and ratios of class invariants, Math. Forum {\bf 13} (1999/00), 45--56.

\bibitem{vasuki} Vasuki, K.R., Sreeramamurthy, T. G.:  Certain new Ramanujan's Schl\"{a}fli-type mixed modular equations, J. Math. Anal. Appl. {\bf 309} (2005), no.~1, 238--255. doi:10.1016/j.jmaa.2005.01.035

\bibitem{y1} Yi, J. : Construction and application of modular equation Ph.D thesis, University of Illionis 2001.


\end{thebibliography}
\end{document}